\title{Gordian adjacency for torus knots}
\author{Peter Feller}
\address{Universit\"at Bern, Sidlerstrasse 5, CH-3012 Bern, Switzerland}
\email{peter.feller@alumni.unibe.ch}
\urladdr{}
\thanks{The author gratefully acknowledges support by the Swiss National Science Foundation Grant 137548.}
\subjclass[2010]{57M25,  57M27, 14B07}
 \keywords{{Gordian distance}, {unknotting number}, {torus knots}, {plane curve singularities}, {adjacency}}
\newtheorem{theorem}{Theorem}
\newtheorem{corollary}[theorem]{Corollary}
\newtheorem{lemma}[theorem]{Lemma}
\newtheorem{example}[theorem]{Example}
\newtheorem{definition}[theorem]{Definition}
\newtheorem{remark}[theorem]{Remark}
\newtheorem{prop}[theorem]{Proposition}
\newtheorem{claim}[theorem]{Claim}
\newenvironment{Example}{\begin{example}\rm}{\end{example}}
\newenvironment{Definition}{\begin{definition}\rm}{\end{definition}}
\newenvironment{Remark}{\begin{remark}\rm}{\end{remark}}
\newenvironment{Claim}{\begin{claim}\rm}{\end{claim}}
\def\et{\quad\mbox{and}\quad}
\def\s{{\sigma}}
\def\so{{\sigma_\omega}}
\def\N{\mathbb{N}}
\def\R{\mathbb{R}}
\def\Z{\mathbb{Z}}
\def\C{\mathbb{C}}
\def\epsilon{\varepsilon}
\begin{document}

\begin{abstract}    
 A knot $K_1$ is called \emph{Gordian adjacent} to a knot $K_2$ if there exists a minimal
unknotting sequence for $K_2$ containing $K_1$.
We provide a sufficient condition for Gordian adjacency of torus knots via the study of knots in the thickened torus $S^1\times S^1\times\R$.
We also completely describe Gordian adjacency for torus knots of braid index 2 and 3 using Levine-Tristram signatures as obstructions to Gordian adjacency.
Our study of Gordian adjacency is motivated by the concept of adjacency for plane curve singularities. In the last section we compare these two notions of adjacency.
\end{abstract}

\maketitle

\section{Introduction}

Let $K_1$ and $K_2$ be smooth knots in $\R^3$ or $S^3$.
Their \emph{Gordian distance} $d_G(K_1,K_2)$ is the minimal number of crossing changes needed to get from $K_1$ to $K_2$,
see e.g.\ Murakami \cite{Murakami_MetricsonKnots}.
The \emph{unknotting number} $u(K)$ of a knot $K$,
which was already studied by Wendt \cite{wendt},
is the distance $d_G(K,O)$, where $O$ denotes the unknot.
The Gordian distance induces a metric on the set of (isotopy classes of) all smooth knots.
This discrete metric space is huge.
For example, every $\Z^n$ can be quasi-isometrically embedded into the subspace consisting of all torus knots
by a result of Gambaudo and Ghys \cite{GambaudoGhys_BraidsSignatures}.
In this paper we study the subspace of torus knots and the simple question,
`When is the triangle inequality $d_G(K_1,K_2)\geq d_G(K_2,O)-d_G(K_1,O)$ an equality?'

\begin{Definition}
 Let $K_1$ and $K_2$ be knots. We say $K_1$ 
is \emph{Gordian adjacent} to $K_2$, denoted by $K_1\leq_G K_2$, if $d_G(K_1,K_2)=u(K_2)-u(K_1)$.
\end{Definition}

Equivalently, a knot $K_1$ is Gordian adjacent to $K_2$
if $K_2$ can be unknotted via $K_1$,
that is, if there exists a minimal unknotting sequence for $K_2$ that contains $K_1$.
A \emph{minimal unknotting sequence} for a knot $K$ is a sequence of $u(K)+1$ knots
starting with $K$ and ending with the unknot $O$
such that any two consecutive knots are related by a crossing change, see Baader~\cite{Baader_UnknottingTK}.
The name `Gordian adjacency' is motivated by the connection to algebraic adjacency, see below.
Gordian adjacency is a partial order.

For two coprime positive integers $n\geq2$ and $m\geq2$,
we denote by $T(n,m)=T(m,n)$ the (positive) \emph{torus knot}
obtained as the standard closure of the $n$-strand positive braid $(\sigma_1\cdots\sigma_{n-1})^m$
or alternatively as the knot of the singularity $x^n-y^m$, see Section~\ref{sec:algebraically}.
The \emph{braid index} of a torus knot $T(n,m)$ is the minimum of $n$ and $m$.

Our main results on Gordian adjacency for torus knots are the following.

\begin{theorem}\label{thm:T(n,m)<T(a,b)ifn<am<b}
Let $(n,m)$ and $(a,b)$ be pairs of coprime positive integers with $n\leq a$ and $m\leq b$.
Then the torus knot $T(n,m)$ is Gordian adjacent to the torus knot $T(a,b)$.
\end{theorem}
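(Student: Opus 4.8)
The plan is to reduce the statement to an upper bound on Gordian distance and then produce an explicit sequence of crossing changes by working in the thickened torus. Since the triangle inequality recorded in the introduction already gives $d_g(T(n,m),T(a,b))\geq u(T(a,b))-u(T(n,m))$, it suffices to exhibit a sequence of crossing changes transforming $T(a,b)$ into $T(n,m)$ of length exactly
\[
u(T(a,b))-u(T(n,m))=\tfrac{(a-1)(b-1)-(n-1)(m-1)}{2},
\]
where the displayed value uses the unknotting number formula $u(T(p,q))=\tfrac{(p-1)(q-1)}{2}$. Any sequence of this length forces equality in the triangle inequality, hence Gordian adjacency; so tightness of the construction comes for free, and the entire difficulty is efficiency.

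To build the sequence I would realize the two torus knots as curves on the standard Heegaard torus and push them into a collar $T^2\times\R$, so that crossing changes can be performed and counted in the projection to $T^2$. The first model case is the full-twist adjacency $T(n,m)\leq_g T(n,m+n)$: the braid $(\sigma_1\cdots\sigma_{n-1})^{m+n}$ differs from $(\sigma_1\cdots\sigma_{n-1})^{m}$ by a full twist $(\sigma_1\cdots\sigma_{n-1})^{n}$, and changing one crossing for each of the $\binom{n}{2}$ pairs of strands unlinks this full twist and lets it cancel; since $u(T(n,m+n))-u(T(n,m))=\binom{n}{2}$, this step is sharp. The genuine role of the thickened torus is that it also accommodates twists performed on sub-collections of strands, which is exactly the extra flexibility that is needed: the pairs $(n,m)$ reachable from $(a,b)$ by full twists on all strands (together with the symmetry $T(p,q)=T(q,p)$) form only the Euclidean--Stern--Brocot descendants of $(a,b)$, a proper subset of $\{\,n\leq a,\ m\leq b\,\}$ — for instance $(2,5)$ is not so reachable from $(3,7)$ — so a purely braid-theoretic reduction via full twists cannot suffice.

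The main construction is therefore a single thickened-torus diagram carrying both slopes: I would draw the $(a,b)$ curve so that it contains a distinguished $(n,m)$ sub-curve together with a collection of excess arcs, arranged so that flipping a prescribed set of crossings between the excess arcs and the rest simultaneously trivializes the excess and isotopes the remainder, via Reidemeister and destabilization moves, to the $(n,m)$ torus knot. Equivalently, one produces intermediate torus-type curves of decreasing complexity in $T^2\times\R$, each obtained from the previous by one crossing change. The key computation is then the crossing-change count: I would verify that the prescribed set has precisely $\tfrac{(a-1)(b-1)-(n-1)(m-1)}{2}$ elements, matching the genus difference. Positivity of the torus-knot diagram is what makes such an efficient prescribed set available.

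I expect the main obstacle to be the geometric bookkeeping on the torus, namely arranging the $(a,b)$ curve so that an honest $(n,m)$ sub-curve is visible and so that the chosen crossing changes genuinely reduce the diagram to $T(n,m)$ rather than merely to a knot with the correct homological data. The truly delicate point is controlling the count to be exactly the genus difference and not one larger, since the slopes increase in both coordinates simultaneously and the intermediate slopes need not be coprime; this is where the thickened-torus framework, by permitting twists on sub-collections of strands, and the positivity of the diagrams do the real work.
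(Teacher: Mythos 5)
Your reduction to the upper bound $d_g(T(n,m),T(a,b))\leq u(T(a,b))-u(T(n,m))$ and your choice of the thickened torus $S^1\times S^1\times\R$ as the ambient setting both match the paper. But there is a genuine gap at the heart of the argument: you never supply a mechanism that guarantees the required crossing changes number exactly $u(T(a,b))-u(T(n,m))$, and you yourself flag this as ``the truly delicate point.'' Your plan is to \emph{prescribe} an explicit set of crossings whose switching demonstrably yields $T(n,m)$, and to hope that positivity of the diagram makes such a set available; no such set is exhibited, and verifying that switching it produces $T(n,m)$ (rather than some other knot in the right homology class) is precisely the hard content. The paper avoids this entirely with a different idea, Lemma~\ref{lemma:entknotendurchwechselnderhaelftederKreuzungen}: over any presimple curve $c$ on the torus there is an \emph{unknotted} knot in $S^1\times S^1\times\R$ projecting to $c$ (built by descending monotonically with respect to a signed distance function measured in the universal cover), and by comparing the given knot with this unknot \emph{and with its reflection in the $\R$-coordinate}, one of the two agrees with the given diagram in at least half of the crossings. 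So at most half of \emph{all} crossings need to be changed --- no prescribed set is ever chosen. The diagram of $T(a,b)$ is then arranged on the torus, homotopic to the simple closed $(n,m)$-curve, with exactly $2\bigl(u(T(a,b))-u(T(n,m))\bigr)$ crossings (e.g.\ $(b-m)(a-1)$ crossings when $a=n$, after Markov destabilizations and a few preliminary crossing changes in the general case), and uniqueness of the unknot in a homotopy class of $S^1\times S^1\times\R$ identifies the result with $T(n,m)$. Without this ``half of the crossings'' lemma, or an explicit verified substitute for it, your proposal is an outline rather than a proof.

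A secondary inaccuracy: your intermediate stages are described as ``torus-type curves of decreasing complexity,'' but the intermediate knots in any such sequence need not be torus knots, and the paper makes no such claim; likewise positivity of the braid plays no role in the paper's counting argument (it enters only in Section~\ref{sec:LTS}, via the Rasmussen invariant, for the \emph{obstruction} side). You would also need to impose and check the presimplicity hypothesis on the projected curve --- injectivity of the lift to the universal cover --- since that is what makes the monotone-descent construction produce an embedded (hence unknotted) representative.
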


\begin{theorem}\label{thm:T(2,n)<T(3,m)}
 Let $n$ and $m$ be positive integers with $n$ odd and $m$ not a multiple of 3. Then the torus knot $T(2,n)$ is Gordian adjacent to $T(3,m)$ if and only if
$n \leq \frac{4}{3}m+\frac{1}{3}.$
\end{theorem}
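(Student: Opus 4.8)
The plan is to prove the two implications separately, using Theorem~\ref{thm:T(n,m)<T(a,b)ifn<am<b} together with transitivity of $\leq_g$ for the sufficiency, and Levine--Tristram signatures for the necessity. Throughout I use $u(T(2,n))=\tfrac{n-1}{2}$ and $u(T(3,m))=m-1$, so that an adjacency forces the Gordian distance $d_g(T(2,n),T(3,m))=u(T(3,m))-u(T(2,n))=\tfrac{2m-n-1}{2}$.

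For the \emph{sufficiency} direction, note first that since $\leq_g$ is a partial order and $T(2,n)\leq_g T(2,N)$ for all odd $n\leq N$ by Theorem~\ref{thm:T(n,m)<T(a,b)ifn<am<b}, it suffices to treat the largest admissible index, i.e.\ $n=N(m)$ the largest odd integer with $N(m)\leq\tfrac43 m+\tfrac13$. Writing $m=3k+1$ or $m=3k-1$ one computes $N(m)=4k+1$ or $N(m)=4k-1$, so the claim reduces to the two families $T(2,4k+1)\leq_g T(3,3k+1)$ and $T(2,4k-1)\leq_g T(3,3k-1)$. These lie on lines of slope $(4,3)$ through the trivial pair $(n,m)=(1,1)$ and the trefoil pair $(3,2)$, so I would argue by induction on $k$, the inductive step being the implication
\[
T(2,n)\leq_g T(3,m)\ \Longrightarrow\ T(2,n+4)\leq_g T(3,m+3).
\]
The bookkeeping is favourable here: $u(T(3,m+3))-u(T(3,m))=3$ while $u(T(2,n+4))-u(T(2,n))=2$, so exactly one extra crossing change is available. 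I would realize this by a local modification of the standard diagram performed in the thickened torus $S^1\times S^1\times\R$, as in the proof of Theorem~\ref{thm:T(n,m)<T(a,b)ifn<am<b}: the extra block $(\sigma_1\sigma_2)^3$ of the braid $(\sigma_1\sigma_2)^{m+3}$ is converted, by a single crossing change, into four additional $\sigma_1$-crossings, so that reducing the remaining $(\sigma_1\sigma_2)^m$-part to $\sigma_1^n$ by the induction hypothesis produces $\sigma_1^{n+4}$, and appending the unknotting sequence of $T(2,n+4)$ yields a minimal unknotting sequence of $T(3,m+3)$ through $T(2,n+4)$. Constructing and verifying this one-crossing-change local move, together with the base case $T(2,5)\leq_g T(3,4)$, is the main obstacle of this direction.

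For the \emph{necessity} direction, the relevant obstruction is the Levine--Tristram signature function $\omega\mapsto\sigma_\omega$; note that the plain inequality $|\sigma_\omega(T(2,n))-\sigma_\omega(T(3,m))|\leq 2\,d_g$ is \emph{not} strong enough (for $(n,m)=(11,7)$ it only yields $d_g\geq1$). The key input is therefore the \emph{one-sided} signature bound for Gordian adjacency of positive knots: if $T(2,n)\leq_g T(3,m)$ then
\[
\sigma_\omega(T(2,n))\ \geq\ \sigma_\omega(T(3,m))\quad\text{for all }\omega\in S^1,
\]
which I would establish first, using that $T(3,m)$ is a positive braid knot so that along a suitable minimal unknotting sequence positive crossings are changed to negative ones and $\sigma_\omega$ can only increase. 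It then remains to compare the two signature functions via the classical lattice-point description: at $\omega=e^{2\pi i t}$ with $0<t<\tfrac12$, $\sigma_\omega(T(2,n))$ equals $-2$ times the number of odd integers in $(0,2nt)$, while $\sigma_\omega(T(3,m))$ jumps by $-2$ at each $t=\{\tfrac a3+\tfrac bm\}$ with $\tfrac a3+\tfrac bm\in(1,2)$ and by $+2$ at each such value lying in $(0,1)$, where $1\leq a\leq 2$ and $1\leq b\leq m-1$; in particular $\sigma_{-1}(T(3,m))$ grows like $-\tfrac43 m$.

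Finally I would show that the one-sided inequality holds for every $\omega$ if and only if $n\leq\tfrac43 m+\tfrac13$. The binding frequency is at $\omega=-1$, or just below it at the last sign change of $t\mapsto\sigma_{e^{2\pi i t}}(T(3,m))$ before $t=\tfrac12$ (this refinement is what distinguishes the residues of $m$ modulo $6$ and is why the threshold is $\tfrac43 m+\tfrac13$ rather than merely $|\sigma_{-1}(T(2,n))|\leq|\sigma_{-1}(T(3,m))|$). The main difficulty of this direction is this exact step-function comparison, i.e.\ the lattice-point count that pins down the sharp constant, together with the justification of the one-sided signature bound.
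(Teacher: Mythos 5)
Your outline follows essentially the same route as the paper: the sufficiency is obtained from an explicit one-crossing-change local move trading a $(\sigma_1\sigma_2)^3$-block for four $\sigma_1$-crossings (this is exactly the move of Proposition~\ref{prop:T(2,n)<T(3,m)}, carried out there via the braid relation, combined with Theorem~\ref{thm:T(n,m)<T(a,b)ifn<am<b} and transitivity), and the necessity uses the one-sided Levine--Tristram bound of Proposition~\ref{prop:Sigundercrossingchange} --- justified, as you indicate, by positivity of the braid forcing every minimal unknotting sequence to consist of positive-to-negative changes --- evaluated at $\omega=-1$ or just below it, with the residue analysis pinning down the sharp constant. The only differences are organizational (you induct in steps of $(+4,+3)$ where the paper performs all $\tfrac{k-1}{2}$ changes in one diagrammatic induction, and you use the standard sign convention), so the proposal is a correct plan matching the paper's proof.
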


The core of the proof of Theorem~\ref{thm:T(n,m)<T(a,b)ifn<am<b} 
is a generalization to knots in $S^1\times S^1\times \R$ of the following elementary fact.
If a knot $K$ in $\R^3$ has a knot diagram with $n$ crossings, then $u(K)\leq\frac{n-1}{2}$.
The proof of Theorem~\ref{thm:T(2,n)<T(3,m)} relies on explicit constructions of the required adjacencies and on Levine-Tristram signatures as obstructions to Gordian adjacency.

As a consequence of Theorem~\ref{thm:T(n,m)<T(a,b)ifn<am<b} Gordian adjacency and Gordian distance for torus knots of a fixed braid index are completely described,
i.e.\ if a positive integer $a$ is fixed, then
\[T(a,b)\leq_GT(a,c)\text{ if and only if }b\leq c\]
for all $b,c$ coprime to $a$.
Hence,
\[d_G(T(a,b),T(a,c))=|u(T(a,b))-u(T(a,c))|=\frac{(a-1)|b-c|}{2},\]
where the second equation follows from the Milnor conjecture,
which determines the unknotting number of torus knots, see equation \eqref{equnknottinNrTorusKnot}.
For torus knots $T(a,b)$ and $T(c,d)$ of different braid indices, it is in general not clear how Gordian adjacency is characterized in terms of $a,b,c,$ and $d$.
Theorem~\ref{thm:T(2,n)<T(3,m)} provides such a characterization for the case of braid index $2$ and $3$. 

\begin{Remark}\label{rem:T(3,m)not<T(2,n)}
To completely determine Gordian adjacency for torus knots of braid index $2$ and $3$, additionally to Theorem~\ref{thm:T(2,n)<T(3,m)}, one has to show that no torus knot of braid index 3 is adjacent to a torus knot of braid index 2.
More generally, Borodzik and Livingston show that a torus knot cannot be Gordian adjacent to a torus knot of strictly smaller braid index~\cite{BorodzikLivingston_13Ar}.
For this, they use a semicontinuity property that they prove using the Heegaard Floer correction term $d$---a Spin$^c$-3-manifold invariant which was defined by
Ozsv{\'a}th and Szab{\'o}~\cite{OzsvathSzabo_03_AbsolutlyGradedFloerHomologies}. 
Using signature obstructions one can only partially prove this result, see Section~\ref{sec:LTS}.
\end{Remark}

An obvious motivation for finding Gordian adjacencies is that,
by definition, every Gordian adjacency determines the Gordian distance of the involved knots.
However, Gordian adjacencies can also lead to good estimates of Gordian distances between non-adjacent torus knots.
For example, 
the adjacencies $T(2,7)\leq_GT(2,9)$ and $T(2,7)\leq_GT(3,5)$ 
yield
\begin{eqnarray*} d_G(T(2,9),T(3,5))  & \leq & u(T(2,9))-u(T(2,7))+u(T(3,5))-u(T(2,7))\\ &=&  4-3+4-3 \quad =\quad 2.\end{eqnarray*}
The converse inequality can be proven using signatures; thus, $d_G(T(2,9),T(3,5))=2$.
Trying to generalize this example for any two torus knots $T_1$ and $T_2$
we look for the highest unknotting number $u(K)$ realized by a knot $K$, adjacent to both $T_1$ and $T_2$,
and ask if $u(T_1)-u(K)+u(T_2)-u(K)$ is close to the Gordian distance $d_G(T_1,T_2)$.
An ambitious future goal is to use such Gordian adjacencies to determine Gordian distances between all torus knots up to a constant factor,
similarly to what was done for cobordism distance by Baader~\cite{Baader_ScissorEq}.

The \emph{cobordism distance} between two knots $K_1$ and $K_2$ is defined to be
the minimal genus of a connected, oriented, and smoothly embedded surface $F$ in $S^3\times[0,1]$
with $\partial F=K_1\times\{0\}\cup K_2\times\{1\}$.
Similar to the unknotting number for the Gordian distance,
the \emph{slice genus} or \emph{4-ball genus} of a knot, denoted by $g_s$,
is the cobordism distance to the unknot $O$.
As a crossing change can be realized by a cobordism of genus 1,
the Gordian distance is larger than the cobordism distance
and a Gordian adjacency between knots $K_1$ and $K_2$ yields a cobordism of genus $u(K_2)-u(K_1)$.

Another motivation for the study of Gordian adjacency
comes from the notion of adjacency for singularities of algebraic curves in $\C^2$ studied by Arnol'd \cite{Arnold_normalforms},
which yields a notion of adjacency for algebraic knots, see Section~\ref{sec:algebraically}.
Such an adjacency of algebraic knots $K_1$ and $K_2$ yields a smooth algebraic curve $F$ in $\C^2$
such that $K_1$ and $K_2$ are realized as transversal intersection of $F$ with two spheres around the origin of different radii $r_1<r_2$, i.e.\
\[K_i=F\cap\{(x,y)\in\C^2\;\vert\;\Vert x \Vert^2+\Vert y\Vert^2=r_i^2\}\subset\{(x,y)\in\C^2\;\vert\;\Vert x \Vert^2+\Vert y\Vert^2=r_i^2\}\cong S^3.\]
By a theorem of Kronheimer and Mrowka~\cite[Corollary 1.3]{KronheimerMrowka_Gaugetheoryforemb}, known as the Thom conjecture,
the slice genus $g_s(K_i)$ of $K_i$ equals the genus of the intersection of $F$ with the ball centered at the origin of $\C^i$ of radius $r_i$;
thus, the cobordism
\[F\cap \{(x,y)\in\C^2\;\vert\;r_1^2\leq \Vert x \Vert^2+\Vert y\Vert^2\leq r_2^2\}\]
in
\[\{(x,y)\in\C^2\;\vert\;r_1^2\leq \Vert x \Vert^2+\Vert y\Vert^2\leq r_2^2\}\cong S^3\times[0,1]\]
has minimal genus $g_s(K_2)-g_s(K_1)$.
By the Milnor conjecture, a consequence of the Thom conjecture, the slice genus and the unknotting number of algebraic knots are equal,
e.g.\ for torus knots one has
\begin{equation}\label{equnknottinNrTorusKnot}
 u(T(n,m)) =  g_s(T(n,m)) = \frac{(n-1)(m-1)}{2}
\end{equation}
for all coprime positive integers $n,m$.

In summary, we know that $u$ and $g_s$ coincide on algebraic knots, and both adjacency notions,
which could be thought of as relative versions of $u$ and $g_s$, respectively, have similar properties.
For example, for both notions it holds that if $K_1$ is adjacent to $K_2$,
then $u(K_1)=g_s(K_1)\leq u(K_2)=g_s(K_2)$
and the cobordism distance equals $u(K_2)-u(K_1)=g_s(K_2)-g_s(K_1)$.
Furthermore, for both notions $T(n,m)$ is adjacent to $T(a,b)$ if $n\leq a$ and $m\leq b$,
see Theorem~\ref{thm:T(n,m)<T(a,b)ifn<am<b} and Proposition~\ref{prop:T(n,m)<T(a,b)ifn<am<b(algebraic)}.
It is then natural to wonder whether the two concepts of adjacency coincide,
for example, on torus knots.
We answer by the negative in Section~\ref{sec:algebraically},
but we give a heuristic argument supporting the conjecture that
if two torus knots are Gordian adjacent, then they are algebraically adjacent.

While algebraic adjacency comes from deformations of polynomials that have algebraic curves as zero-sets, there is a more restrictive notion---$\delta$-constant adjacency---coming from deformations of parametrizations of algebraic curves. This adjacency notion seems to be closely related to Gordian adjacency; see for example~\cite{BorodzikLivingston_13Ar}, where it is proved that $\delta$-constant adjacency of knots $K_1$ and $K_2$ implies Gordian adjacency up to certain concordances. We hope to come back to this in future work. 

To decide whether a knot is Gordian adjacent to another knot,
the unknotting numbers of the involved knots should certainly be known;
thus, even ignoring the connection to algebraic adjacency,
equality~\eqref{equnknottinNrTorusKnot} is relevant to the study of Gordian adjacency for torus knots.
It is used throughout the text.

Section~\ref{sec:T(2,?)<T(3,?)} discusses examples of Gordian adjacent torus knots of braid index 2 and 3.
In Section~\ref{sec:unknottingonsurfaces} we study unknotting of knots in $S^1\times S^1\times \R$
and use it to prove Theorem~\ref{thm:T(n,m)<T(a,b)ifn<am<b}.
Section~\ref{sec:LTS} introduces Levine-Tristram signatures as obstructions to Gordian adjacencies
and uses them to prove Theorem~\ref{thm:T(2,n)<T(3,m)}.
In Section~\ref{sec:higherIndex&Asymtotics} we study Gordian adjacencies between torus knots of higher braid indices.
The relation between algebraic and Gordian adjacency is discussed in Section~\ref{sec:algebraically}.
In particular, Proposition~\ref{prop:a,bc->b,ac} provides an infinite family of examples of algebraic adjacent torus knots
that are not Gordian adjacent.

{\bf{Acknowledgements}:} I thank Sebastian Baader for introducing me to unknotting and for his ongoing support.
Thanks also to Masaharu Ishikawa for enlightening comments and technical references that led to Proposition~\ref{prop:a,bc->b,ac} and to Maciej Borodzik for pointing me towards $\delta$-constant deformations.
Finally, I wish to thank the referee for helpful suggestions and corrections.

\section{Examples of Gordian adjacencies.}\label{sec:T(2,?)<T(3,?)}
By definition, the unknot $O$ is adjacent to every knot $K$.
Let $k$ be a positive integer.
The unknotting number of the torus knot $T(2,2k+1)$ is $k$.
A minimal unknotting sequence of $T(2,2k+1)$ is provided by
\[
T(2,2k+1)\to T(2,2k-1)\to\cdots\to T(2,5)\to T(2,3)\to O.
\]
Consequently, $T(2,2l+1)\leq_GT(2,2k+1)$ for all $l\leq k$, a simple instance of Theorem~\ref{thm:T(n,m)<T(a,b)ifn<am<b}.
We now construct explicit examples of Gordian adjacencies that are not provided by Theorem~\ref{thm:T(n,m)<T(a,b)ifn<am<b}.
Let $\lfloor\cdot\rfloor$ denote the integer part of a real number.

\begin{prop}
 \label{prop:T(2,n)<T(3,m)}
For every positive integer $k$, we have
\[
T(2,2k+1) \leq_G T(3,\lfloor\frac{3}{2}k+1\rfloor).
\]
\end{prop}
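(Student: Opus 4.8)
The plan is to dispose of the easy inequality first and then reduce everything to one explicit braid construction. Write $m=\lfloor\frac{3}{2}k+1\rfloor$. By equation (\ref{equnknottinNrTorusKnot}) one has $u(T(2,2k+1))=k$ and $u(T(3,m))=m-1$, and a short computation gives $u(T(3,m))-u(T(2,2k+1))=\lfloor\frac{k}{2}\rfloor$; set $j:=\lfloor\frac{k}{2}\rfloor$. Since the unknotting number is the Gordian distance to the unknot, the triangle inequality yields for free that $d_g(T(2,2k+1),T(3,m))\geq u(T(3,m))-u(T(2,2k+1))=j$. Hence it suffices to produce a chain of $j$ crossing changes turning $T(3,m)$ into $T(2,2k+1)$: prepending it to the standard unknotting sequence $T(2,2k+1)\to T(2,2k-1)\to\cdots\to O$ of length $k$ produces an unknotting sequence for $T(3,m)$ of length $j+k=m-1=u(T(3,m))$ that passes through $T(2,2k+1)$, which is exactly the assertion $T(2,2k+1)\leq_g T(3,m)$.

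For the construction I would represent $T(3,m)$ as the closure of the positive braid $(\sigma_1\sigma_2)^m$ on three strands. The key algebraic identity is $(\sigma_1\sigma_2)^3=(\sigma_1\sigma_2^2)^2$, a consequence of the braid relation $\sigma_1\sigma_2\sigma_1=\sigma_2\sigma_1\sigma_2$; iterating it gives $(\sigma_1\sigma_2)^{3j}=(\sigma_1\sigma_2^2)^{2j}$, so that, writing $m=3j+1$ or $m=3j+2$ according to the parity of $k$ (indeed $j=\lfloor m/3\rfloor=\lfloor k/2\rfloor$),
\[
(\sigma_1\sigma_2)^{3j+1}=(\sigma_1\sigma_2^2)^{2j}\,\sigma_1\sigma_2
\qquad\text{and}\qquad
(\sigma_1\sigma_2)^{3j+2}=(\sigma_1\sigma_2^2)^{2j}\,\sigma_1\sigma_2\sigma_1\sigma_2 .
\]
The elementary move is to change a single positive crossing inside a block $\sigma_2^2$ from $\sigma_2$ to $\sigma_2^{-1}$ and cancel the resulting $\sigma_2\sigma_2^{-1}$ by a Reidemeister~II move. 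This is one crossing change, it deletes a block $\sigma_2^2$, it leaves the word positive, and it lowers the genus of the resulting positive braid by one.

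I would then apply this move to $j$ of the $2j$ blocks $\sigma_2^2$ in $(\sigma_1\sigma_2^2)^{2j}$, say to every second one. A direct computation collapses the word to $\sigma_1(\sigma_2^2\sigma_1^2)^{j}\sigma_2$ when $k=2j$ is even, and to $\sigma_1(\sigma_2^2\sigma_1^2)^{j}\sigma_2\sigma_1\sigma_2$ when $k=2j+1$ is odd. In both cases exactly $j$ crossing changes have been used, and the number of remaining crossings equals $2k+2$, as it must. It then remains to identify the closures of these braids: repeatedly feeding the braid relation $\sigma_1\sigma_2\sigma_1=\sigma_2\sigma_1\sigma_2$ and cyclic permutations into the word brings the generator $\sigma_1$ down to a single occurrence, and a Markov destabilisation removes that strand, leaving the two-strand braid $\sigma_1^{2k+1}$, i.e.\ $T(2,2k+1)$.

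The main obstacle is precisely this last identification: one has to verify that $\sigma_1(\sigma_2^2\sigma_1^2)^{j}\sigma_2$ (and its odd analogue) really is conjugate in the three-strand braid group to $\sigma_1\sigma_2^{2k+1}$, and hence represents $T(2,2k+1)$ after destabilisation, rather than some other positive braid knot of the same genus $k$. I expect to treat this by an induction on $j$, checking the base case $j=1$ by hand (where $\sigma_1\sigma_2^2\sigma_1^2\sigma_2$ is conjugate to $\sigma_1\sigma_2^5$, whose closure is $T(2,5)$) and propagating the reduction; the only delicate point is the bookkeeping of the $\sigma_1$- and $\sigma_2$-exponents, not any conceptual difficulty.
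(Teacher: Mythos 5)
Your reduction of the problem is correct: with $m=\lfloor\frac32k+1\rfloor$ and $j=\lfloor\frac k2\rfloor$ one indeed has $u(T(3,m))-u(T(2,2k+1))=j$, so it suffices to turn $T(3,m)$ into $T(2,2k+1)$ by $j$ crossing changes, and your identity $(\sigma_1\sigma_2)^{3}=(\sigma_1\sigma_2^2)^{2}$ and the ``delete a $\sigma_2^2$-block'' move are both valid. The gap is exactly at the point you flag as the ``main obstacle'', and it is not a bookkeeping issue: the identification fails. Already for $k=3$ (so $j=1$, $m=5$) your word is $\sigma_1\sigma_2^2\sigma_1^2\sigma_2\sigma_1\sigma_2$, and applying the braid relation twice gives $\sigma_1\sigma_2^2\sigma_1^2\sigma_2\sigma_1\sigma_2\sim\sigma_1^3\sigma_2\sigma_1^3\sigma_2\sim(\sigma_1\sigma_2)^4$; its closure is $T(3,4)$, not $T(2,7)$. (This is consistent with one crossing change, since $T(3,4)\leq_gT(3,5)$, but it is the wrong knot.) The determinant detects this uniformly: evaluating the reduced Burau representation at $t=-1$ gives determinant $3$ for the closure of $\sigma_1(\sigma_2^2\sigma_1^2)^{j}\sigma_2$ with $j=2$ as well, whereas $\det T(2,9)=9$; and deleting two adjacent blocks instead gives determinant $1$. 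So no choice of which $j$ blocks to cancel rescues the construction --- these words are not conjugate to $\sigma_1\sigma_2^{4j+1}$, and the proposed induction cannot be carried out.

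The underlying reason is that cancelling a $\sigma_2^2$-block preserves the ``index~3'' periodic structure $(\sigma_1\sigma_2^2)^2=\Delta^2$ of the word rather than converting it into the index~2 pattern $\sigma_1\sigma_2^{N}$. The paper's construction runs in the opposite direction (from $T(2,2k+1)$ up to $T(3,\cdot)$) and uses a more delicate local move: it first prepares a specific pattern with the braid relation $\sigma_1\sigma_2\sigma_1=\sigma_2\sigma_1\sigma_2$ and only then changes a crossing, in such a way that each change converts a genuine twist region of the $(2,2k+1)$-pattern into a $(\sigma_1\sigma_2)$-periodic piece. If you want to keep your top-down formulation, you would need a crossing change of that more elaborate type; the naive $\sigma_2^2\mapsto 1$ move provably does not suffice.
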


\begin{proof}
The knot $T(2,2k+1)$ is the standard closure of the braid
$\xygraph{
!{0;/r0.7pc/:}
[uuuuuu]!{\xcapv[4]@(0)} [ld]
[luu]!{\vcrossneg}
[dlll]{{k-3}\,\{}[r(3.5)u(0.5)]{\vdots}
[l(0.5)d(1.5)]!{\vcrossneg}
!{\xcapv[1]@(0)} [ld]
[rruu]!{\vcrossneg}
[l]!{\vcrossneg}
[urr]!{\xcapv[1]@(0)} [ld]
[lu]!{\xcapv[1]@(0)} [ld]
[uurr]!{\vcrossneg}
[l]!{\vcrossneg}
!{\vcrossneg}
[dlll]{{k-3}\,\{}[r(3.5)u(0.5)]{\vdots}
[r(1.5)u(2.5)]!{\xcapv[5]@(0)} [ld]
[ldd]!{\vcrossneg}
}\hspace{-40pt}~,$ where $k-3$ denotes the number of the crossings not drawn.
We introduce a 
crossing change for knots containing a part that looks (in an appropriate diagram) like the above $T(2,2k+1)$.
\begin{equation}\label{eq:crossingchange}
\xygraph{
!{0;/r0.7pc/:}
[uuuuu]{\vdots}
[d(1.4)l]!{\vcrossneg}
!{\vcrossneg}
[uurr]!{\xcapv[2]@(0)} [ld]
[l]!{\xcapv[1]@(0)}[ld]
[rruu]!{\vcrossneg}
[l]!{\vcrossneg}
[urr]!{\xcapv[1]@(0)} [ld]
[lu]!{\xcapv[1]@(0)} [ld]
[uurr]!{\vcrossneg}
[l]!{\vcrossneg}
!{\vcrossneg}
!{\vcrossneg}
[uuurr]!{\xcapv[3]@(0)} [ld]
[dd]{\vdots}
}\hspace{-1pc}=\;
\xygraph{
!{0;/r0.7pc/:}
[uuuuuu]{\vdots}
[d(1.4)l]!{\vcrossneg}
[urr]!{\xcapv[1]@(0)} [ld]
[u]!{\vcrossneg}
[ul]!{\xcapv[1]@(0)} [ld]
[ur]!{\vcrossneg}
[urr]!{\xcapv[1]@(0)} [ld]
[ul]!{\xcapv[1]@(0)} [ld]
[uurr]!{\vcrossneg}
[lu]!{\xcapv[1]@(0)} [ld]
[u(1)r(2)]!{\vcrossneg}
[uul]!{\xcapv[2]@(0)} [ld]
[r]!{\vcrossneg}
[urr]!{\xcapv[5]@(0)} [ld]
[ul]!{\xcapv[2]@(0)}[ld]
[r]!{\vcrossneg}
[urr]!{\xcapv[1]@(0)}[ld]
[ul]!{\vcrossneg}
[uuuur]!{\xcapv[2]@(0)}[dddd]
[]{\vdots}
}\hspace{-2.5pc}=\
\xygraph{
!{0;/r0.7pc/:}
[uuuuuu]{\vdots}
[d(1.4)l]!{\vcrossneg}
[urr]!{\xcapv[1]@(0)} [ld]
[u]!{\vcrossneg}
[ul]!{\xcapv[1]@(0)} [ld]
[ur]!{\vcrossneg}
[urr]!{\xcapv[1]@(0)} [ld]
[ul]!{\xcapv[1]@(0)} [ld]
[uurr]!{\vcrossneg}
[lu]!{\xcapv[1]@(0)} [ld]
[u(1)r(2)]!{\vcrossneg}
[uul]!{\xcapv[2]@(0)} [ld]
[r]!{\vcrossneg}
[urr]!{\xcapv[1]@(0)} [ld]
[u]!{\vcross}
!{\vcrossneg}
[uul]!{\xcapv[2]@(0)}[ld]
[r]!{\vcrossneg}
[urr]!{\xcapv[1]@(0)}[ld]
[ul]!{\vcrossneg}
[urr]!{\xcapv[1]@(0)}[ld]
[]{\vdots}
}\overset{\text{crossing change}}{\longleftarrow}
\xygraph{
!{0;/r0.7pc/:}
[uuuuuu]{\vdots}
[d(1.4)l]!{\vcrossneg}
[urr]!{\xcapv[1]@(0)} [ld]
[u]!{\vcrossneg}
[ul]!{\xcapv[1]@(0)} [ld]
[ur]!{\vcrossneg}
[urr]!{\xcapv[1]@(0)} [ld]
[ul]!{\xcapv[1]@(0)} [ld]
[uurr]!{\vcrossneg}
[lu]!{\xcapv[1]@(0)} [ld]
[u(1)r(2)]!{\vcrossneg}
[uul]!{\xcapv[2]@(0)} [ld]
[r]!{\vcrossneg}
[urr]!{\xcapv[1]@(0)} [ld]
[u]!{\vcrossneg}
!{\vcrossneg}
[uul]!{\xcapv[2]@(0)}[ld]
[r]!{\vcrossneg}
[urr]!{\xcapv[1]@(0)}[ld]
[ul]!{\vcrossneg}
[urr]!{\xcapv[1]@(0)}[ld]
[]{\vdots}
}=\;\;
\xygraph{
!{0;/r0.7pc/:}
[uuuuuu]{\vdots}
[d(1.4)l]!{\vcrossneg}
[urr]!{\xcapv[1]@(0)} [ld]
[u]!{\vcrossneg}
[ul]!{\xcapv[1]@(0)} [ld]
[ur]!{\vcrossneg}
[urr]!{\xcapv[1]@(0)} [ld]
[lu]!{\xcapv[1]@(0)} [ld]
[uurr]!{\vcrossneg}
[l]!{\vcrossneg}
[urr]!{\xcapv[1]@(0)} [ld]
[lu]!{\xcapv[1]@(0)}[ld]
[rruu]!{\vcrossneg}
[l]!{\vcrossneg}
[urr]!{\xcapv[1]@(0)} [ld]
[lu]!{\xcapv[1]@(0)} [ld]
[uurr]!{\vcrossneg}
[l]!{\vcrossneg}
[urr]!{\xcapv[1]@(0)} [ld]
[ul]!{\vcrossneg}
[urr]!{\xcapv[1]@(0)} [ld]
[]{\vdots}
}=\;
\xygraph{
!{0;/r0.7pc/:}
[uuuuuu]{\vdots}
[d(1.4)]!{\vcrossneg}
[ul]!{\xcapv[1]@(0)} [ld]
[ur]!{\vcrossneg}
[urr]!{\xcapv[1]@(0)} [ld]
[u]!{\vcrossneg}
[ul]!{\xcapv[1]@(0)} [ld]
[ur]!{\vcrossneg}
[urr]!{\xcapv[1]@(0)} [ld]
[u]!{\vcrossneg}
[ul]!{\xcapv[1]@(0)} [ld]
[ur]!{\vcrossneg}
[urr]!{\xcapv[1]@(0)} [ld]
[u]!{\vcrossneg}
[ul]!{\xcapv[1]@(0)} [ld]
[ur]!{\vcrossneg}
[urr]!{\xcapv[1]@(0)} [ld]
[u]!{\vcrossneg}
[ul]!{\xcapv[1]@(0)} [ld]
[ur]!{\vcrossneg}
[urr]!{\xcapv[1]@(0)} [ld]
[]{\vdots}
}
,
\end{equation}
where the first and the two last equalities are obtained 
by applying the braid relation
\[\sigma_2\sigma_1\sigma_2=\hspace{0.5pc}
\xygraph{
!{0;/r0.7pc/:}
[uuu]
[d(1.4)]!{\vcrossneg}
[ul]!{\xcapv[1]@(0)} [ld]
[ur]!{\vcrossneg}
[urr]!{\xcapv[1]@(0)} [ld]
[u]!{\vcrossneg}
[ul]!{\xcapv[1]@(0)}[dr]
}\hspace{-0.5pc}=\;
 \xygraph{
!{0;/r0.7pc/:}
[uuu]
[d(1.4)r]!{\xcapv[1]@(0)}
[ull]!{\vcrossneg}
[r]!{\vcrossneg}
[ul]!{\xcapv[1]@(0)} [ld]
[ur]!{\vcrossneg}
[urr]!{\xcapv[1]@(0)} 
.}\hspace{-0.5pc}
=\sigma_1\sigma_2\sigma_1.\]

First consider the case when $k$ is odd. We use \eqref{eq:crossingchange} inductively.
\begin{eqnarray*}
T(2,2k+1)\;\longleftarrow\hspace{-30pt}
 \xygraph{
!{0;/r0.7pc/:}
[uuuuuuuuu]!{\xcapv[4]@(0)} [ld]
[luu]!{\vcrossneg}
[dlll]{{k-5}\,\{}[r(3.5)u(0.5)]{\vdots}
[l(0.5)d(1.5)]!{\vcrossneg}
!{\xcapv[1]@(0)} [ld]
[rruu]!{\vcrossneg}
[ul]!{\xcapv[1]@(0)} [ld]
[ur]!{\vcrossneg}
[urr]!{\xcapv[1]@(0)} [ld]
[u]!{\vcrossneg}
[ul]!{\xcapv[1]@(0)} [ld]
[ur]!{\vcrossneg}
[urr]!{\xcapv[1]@(0)} [ld]
[u]!{\vcrossneg}
[ul]!{\xcapv[1]@(0)} [ld]
[ur]!{\vcrossneg}
[urr]!{\xcapv[1]@(0)} [ld]
[u]!{\vcrossneg}
[ul]!{\xcapv[1]@(0)} [ld]
[ur]!{\vcrossneg}
[urr]!{\xcapv[1]@(0)} [ld]
[u]!{\vcrossneg}
[ul]!{\xcapv[1]@(0)} [ld]
[ur]!{\vcrossneg}
[urr]!{\xcapv[1]@(0)} [ld]
[lu]!{\vcrossneg}
[dlll]{{k-5}\,\{}[r(3.5)u(0.5)]{\vdots}
[r(1.5)u(2.5)]!{\xcapv[5]@(0)} [ld]
[ldd]!{\vcrossneg}
}\hspace{-35pt}=\;
 \xygraph{
!{0;/r0.7pc/:}
[uuuuuuuuu]!{\xcapv[4]@(0)} [ld]
[luu]!{\vcrossneg}
[dlll]{{k-5}\,\{}[r(3.5)u(0.5)]{\vdots}
[l(0.5)d(1.5)]!{\vcrossneg}
!{\xcapv[1]@(0)} [ld]
[rruu]!{\vcrossneg}
[ul]!{\xcapv[1]@(0)} [ld]
[ur]!{\vcrossneg}
[urr]!{\xcapv[1]@(0)} [ld]
[u]!{\vcrossneg}
[ul]!{\xcapv[1]@(0)} [ld]
[ur]!{\vcrossneg}
[urr]!{\xcapv[1]@(0)} [ld]
[lu]!{\vcrossneg}
[dlll]{{k-5}\,\{}[r(3.5)u(0.5)]{\vdots}
[r(1.5)u(2.5)]!{\xcapv[5]@(0)} [ld]
[ldd]!{\vcrossneg}
[r]!{\vcrossneg}
[ul]!{\xcapv[1]@(0)} [ld]
[ur]!{\vcrossneg}
[urr]!{\xcapv[1]@(0)} [ld]
[u]!{\vcrossneg}
[ul]!{\xcapv[1]@(0)} [ld]
[ur]!{\vcrossneg}
[urr]!{\xcapv[1]@(0)} [ld]
[u]!{\vcrossneg}
[ul]!{\xcapv[1]@(0)} [ld]
[ur]!{\vcrossneg}
[urr]!{\xcapv[1]@(0)} [ld]
}\hspace{-35pt}\longleftarrow\hspace{-20pt}
 \xygraph{
!{0;/r0.7pc/:}
[uuuuuuuuuuuu]!{\xcapv[4]@(0)} [ld]
[luu]!{\vcrossneg}
[dlll]{{k-7}\,\{}[r(3.5)u(0.5)]{\vdots}
[l(0.5)d(1.5)]!{\vcrossneg}
!{\xcapv[1]@(0)} [ld]
[rruu]!{\vcrossneg}
[ul]!{\xcapv[1]@(0)} [ld]
[ur]!{\vcrossneg}
[urr]!{\xcapv[1]@(0)} [ld]
[u]!{\vcrossneg}
[ul]!{\xcapv[1]@(0)} [ld]
[ur]!{\vcrossneg}
[urr]!{\xcapv[1]@(0)} [ld]
[u]!{\vcrossneg}
[ul]!{\xcapv[1]@(0)} [ld]
[ur]!{\vcrossneg}
[urr]!{\xcapv[1]@(0)} [ld]
[u]!{\vcrossneg}
[ul]!{\xcapv[1]@(0)} [ld]
[ur]!{\vcrossneg}
[urr]!{\xcapv[1]@(0)} [ld]
[u]!{\vcrossneg}
[ul]!{\xcapv[1]@(0)} [ld]
[ur]!{\vcrossneg}
[urr]!{\xcapv[1]@(0)} [ld]
[lu]!{\vcrossneg}
[dlll]{{k-7}\,\{}[r(3.5)u(0.5)]{\vdots}
[r(1.5)u(2.5)]!{\xcapv[5]@(0)} [ld]
[ldd]!{\vcrossneg}
[r]!{\vcrossneg}
[ul]!{\xcapv[1]@(0)} [ld]
[ur]!{\vcrossneg}
[urr]!{\xcapv[1]@(0)} [ld]
[u]!{\vcrossneg}
[ul]!{\xcapv[1]@(0)} [ld]
[ur]!{\vcrossneg}
[urr]!{\xcapv[1]@(0)} [ld]
[u]!{\vcrossneg}
[ul]!{\xcapv[1]@(0)} [ld]
[ur]!{\vcrossneg}
[urr]!{\xcapv[1]@(0)} [ld]
}
\hspace{-35pt}=\;
 \xygraph{
!{0;/r0.7pc/:}
[uuuuuuuuuuuu]!{\xcapv[4]@(0)} [ld]
[luu]!{\vcrossneg}
[dlll]{{k-7}\,\{}[r(3.5)u(0.5)]{\vdots}
[l(0.5)d(1.5)]!{\vcrossneg}
!{\xcapv[1]@(0)} [ld]
[rruu]!{\vcrossneg}
[ul]!{\xcapv[1]@(0)} [ld]
[ur]!{\vcrossneg}
[urr]!{\xcapv[1]@(0)} [ld]
[u]!{\vcrossneg}
[ul]!{\xcapv[1]@(0)} [ld]
[ur]!{\vcrossneg}
[urr]!{\xcapv[1]@(0)} [ld]
[lu]!{\vcrossneg}
[dlll]{{k-7}\,\{}[r(3.5)u(0.5)]{\vdots}
[r(1.5)u(2.5)]!{\xcapv[5]@(0)} [ld]
[ldd]!{\vcrossneg}
[r]!{\vcrossneg}
[ul]!{\xcapv[1]@(0)} [ld]
[ur]!{\vcrossneg}
[urr]!{\xcapv[1]@(0)} [ld]
[u]!{\vcrossneg}
[ul]!{\xcapv[1]@(0)} [ld]
[ur]!{\vcrossneg}
[urr]!{\xcapv[1]@(0)} [ld]
[u]!{\vcrossneg}
[ul]!{\xcapv[1]@(0)} [ld]
[ur]!{\vcrossneg}
[urr]!{\xcapv[1]@(0)} [ld]
[u]!{\vcrossneg}
[ul]!{\xcapv[1]@(0)} [ld]
[ur]!{\vcrossneg}
[urr]!{\xcapv[1]@(0)} [ld]
[u]!{\vcrossneg}
[ul]!{\xcapv[1]@(0)} [ld]
[ur]!{\vcrossneg}
[urr]!{\xcapv[1]@(0)} [ld]
[u]!{\vcrossneg}
[ul]!{\xcapv[1]@(0)} [ld]
[ur]!{\vcrossneg}
[urr]!{\xcapv[1]@(0)} [ld]
}\hspace{-40pt}\\
\\
\\
\underbrace{\longleftarrow\;\;\cdots\;\;\longleftarrow\;\;\cdots\;\;\cdots\;\;\cdots\;\;\longleftarrow\;\;\cdots\;\;\longleftarrow}_{\frac{k-5}{2}\text{ crossing changes}}\;\;T(3,3\frac{k-1}{2}+2),
\end{eqnarray*}
where every arrow indicates a 
crossing change as in~\eqref{eq:crossingchange} and the equalities are obtained by using that the full twist
$~\xygraph{
!{0;/r0.7pc/:}
[u(3)]!{\vcrossneg}
[ul]!{\xcapv[1]@(0)}
!{\vcrossneg}
[urr]!{\xcapv[1]@(0)}
[l]!{\vcrossneg}
[ul]!{\xcapv[1]@(0)}
!{\vcrossneg}
[urr]!{\xcapv[1]@(0)}
[l]!{\vcrossneg}
[ul]!{\xcapv[1]@(0)}
!{\vcrossneg}
[urr]!{\xcapv[1]@(0)}
}\hspace{-5pt}$
commutes with every $3$-braid. Thus,
\begin{eqnarray*}
d_G\left(T(2,2k+1),T(3,3\frac{k-1}{2}+2)\right)  \leq  \frac{k-1}{2}  = (3\frac{k-1}{2}+1)-k\\
 \overset{\eqref{equnknottinNrTorusKnot}}{=} u(T(3,3\frac{k-1}{2}+2))-u(T(2,2k+1)).
\end{eqnarray*}
The case when $k$ is even has essentially the same proof except that the last crossing change does not use \eqref{eq:crossingchange} but a slight variation of it.
\end{proof}
\section{Unknotting on the torus and proof of Theorem~\ref{thm:T(n,m)<T(a,b)ifn<am<b}}\label{sec:unknottingonsurfaces}
Knots in $\R^3$ can be studied via knot diagrams on $\R^2$ up to Reidemeister equivalence.
Similarly, for a surface $F$ knots in $F\times \R$ can be studied via knot diagrams on $F$.

In a knot diagram on $\R^2$ with $n$ crossings one needs to change at most $\lfloor\frac{n-1}{2}\rfloor$ of the crossings to get the unknot.
This is easily proved geometrically by drawing a knot in $\R^3$ that projects to the curve on $\R^2$ given by the diagram and that descends (or ascends) monotonically except over one point in the diagram,
see Figure~\ref{fig:unknottingintheplane},
\begin{figure}[h]
\centering
\psfrag{p}{$p$}
\psfrag{-->}{$\longrightarrow$}
\includegraphics[width=.9\textwidth]{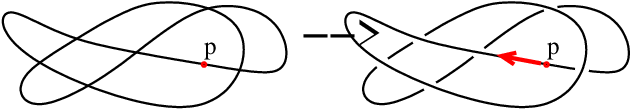}
\caption{Any curve $c$ in $\R^2$ is the projection of the unknot in $\R^3$ given by starting at any point $p$ in $\R^3$ that projects to $c$ and then descending while following $c$.}
\label{fig:unknottingintheplane}
\end{figure}
and remarking that such a knot is the unknot.
To prove Theorem~\ref{thm:T(n,m)<T(a,b)ifn<am<b}, which is a statement entirely about knots in $\R^3$, one is surprisingly led to ask whether a similar fact holds for knots in $S^1\times S^1\times \R$.
We provide such a result, which we then use to prove Theorem~\ref{thm:T(n,m)<T(a,b)ifn<am<b}.

Let $F$ be a surface. In what follows a closed smooth curve $c\colon[0,1]\to F$ is called \emph{presimple} if its lift $\tilde{c}\colon \R\to\tilde{F}$ to the universal cover $\tilde{F}$ of $F$ is injective
and if $c$ is homotopic to a simple closed curve.
A knot in $F\times\R$ that is isotopic to a knot that projects to a simple closed curve on $F$ is called \emph{unknotted}.
\begin{Remark}\label{rmk:unknot} There is at most one unknot (up to isotopy) in every homotopy class of closed curves in $F\times\R$.
This follows from the fact that homotopic simple closed curves in surfaces are isotopic, see Epstein \cite{Epstein_66_curves}.

In the case of the torus we can be more precise. 
A homotopy class of closed curves in $S^1\times S^1\times\R$ contains an unknot, which is unique up to isotopy,
if and only if (via the usual identification of $\pi_1(S^1\times S^1)\cong \pi_1(S^1\times S^1\times \R)$ with $\Z^2$) the corresponding element in $\Z^2$
has coprime entries or is $(0,0)$.
This is a reformulation of the classification of simple closed curves in $S^1\times S^1$, written, for example, in Rolfsen's textbook\ \cite{rolfsen_knotsandlinks}.\end{Remark}
\begin{lemma}\label{lemma:entknotendurchwechselnderhaelftederKreuzungen}
For every 
presimple curve $c$ in $S^1\times S^1$ 
there is a knot $O$ in $S^1\times S^1\times\R$ that projects to $c$ on $S^1\times S^1$ and that is unknotted.
\end{lemma}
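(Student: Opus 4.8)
The plan is to imitate the planar construction of Figure~\ref{fig:unknottingintheplane}, replacing ``descend monotonically while following $c$'' by ``use the transverse coordinate of the universal cover as the height''. First I would record what the two halves of the presimple hypothesis give me. Since $c$ is homotopic to a simple closed curve and, by injectivity of its lift $\tilde c\colon\R\to\R^2$, is not homotopic to a constant (a null-homotopic loop would lift to a periodic, hence non-injective, curve), its class $[c]\in H_1(S^1\times S^1;\Z)\cong\Z^2$ is a nonzero primitive vector. Applying a suitable element of $GL_2(\Z)$, realized by a diffeomorphism of $S^1\times S^1$ that only permutes unknottedness and simplicity of projections, I may assume $[c]=(1,0)$. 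Then a lift $\tilde c=(\tilde c_1,\tilde c_2)\colon\R\to\R^2$ satisfies $\tilde c(t+1)=\tilde c(t)+(1,0)$; in particular the second coordinate $\tilde c_2$ is $1$-periodic and descends to a well-defined function $S^1=\R/\Z\to\R$.

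Next I would define the candidate knot by $O(t)=(c(t),\tilde c_2(t))\in S^1\times S^1\times\R$, which by construction projects to $c$. To check that $O$ is embedded, consider a self-intersection $c(t_1)=c(t_2)$ of the projection; it lifts to $\tilde c(t_1)=\tilde c(t_2)+(a,b)$ for some $(a,b)\in\Z^2$. If $b=0$, then $\tilde c(t_1)=\tilde c(t_2)+(a,0)=\tilde c(t_2+a)$, so injectivity of $\tilde c$ forces $t_1\equiv t_2$, i.e.\ no genuine crossing. Hence every crossing has $b\neq 0$, and the two heights $\tilde c_2(t_1)=\tilde c_2(t_2)+b$ differ. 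Thus the vertical coordinate resolves every crossing and $O$ is a knot.

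To see that $O$ is unknotted I would exhibit an annulus containing it. The map $(u,w)\mapsto(u,\,w\bmod 1,\,w)$ embeds $S^1\times\R$ as the surface $S=\{(u,v,z)\in S^1\times S^1\times\R: z\equiv v \bmod 1\}$, and $O$ lies on $S$ since its height $\tilde c_2(t)$ reduces mod $1$ to its $v$-coordinate. In the cylinder coordinates $(u,w)=(\tilde c_1(t)\bmod 1,\tilde c_2(t))$ the knot $O$ becomes the loop $t\mapsto(\tilde c_1(t)\bmod 1,\tilde c_2(t))$, which the same injectivity argument shows is an \emph{embedded} circle, and which winds once around the $S^1$-factor because $\tilde c_1$ increases by $1$ over a period. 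An embedded essential circle in an annulus is isotopic, within the annulus, to a core $S^1\times\{\ast\}$; transporting this isotopy into the ambient $S^1\times S^1\times\R$ and noting that the core projects to the simple closed curve $\{v=\mathrm{const}\}$, I conclude that $O$ is isotopic to a knot projecting to a simple closed curve, i.e.\ $O$ is unknotted.

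I expect the main obstacle to be the bookkeeping in the middle two steps: verifying simultaneously that $O$ is embedded and that it sits on $S$ as an essential \emph{simple} circle. This is precisely where the injectivity of $\tilde c$ built into the notion of \emph{presimple} is indispensable, and it is what separates this statement from the naive analogue. Once that is established, the remainder reduces to the standard classification of essential embedded circles on an annulus, and the homotopy condition in the definition of presimple enters only to guarantee that $[c]$ is primitive, so that the normalizing change of basis in the first step exists.
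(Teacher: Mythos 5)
Your construction is correct, and its core coincides with the paper's: after normalising so that $[c]=(1,0)$, your height function $\tilde c_2$ is exactly the paper's oriented distance $d$ to the straight lift $\tilde{g}$ of a simple closed geodesic in the class of $c$, and your embeddedness argument (a deck transformation with vanishing transverse part is a translation along $\tilde{g}$ by an integer $a$, so $\tilde c(t_1)=\tilde c(t_2+a)$ and injectivity of $\tilde c$ forces $t_1\equiv t_2$) is precisely the paper's argument that $H_t$ is injective. Where you genuinely diverge is the proof of unknottedness. The paper constructs an equivariant presimple homotopy $\tilde h_t$ from $\tilde c$ to $\tilde g$, lifts it to $H_t(s)=\bigl(h_t(s),d(\tilde h_t(s))\bigr)$, and runs the injectivity argument at every time $t$ to obtain an isotopy ending at a knot lying over the simple closed curve $g$. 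You instead note that your knot sits on the properly embedded helicoidal annulus $\{(u,v,z): z\equiv v \bmod 1\}$ as an essential embedded circle and appeal to the classification of essential embedded circles in an annulus, whose core projects to a simple closed curve. Your finish trades the construction of the equivariant homotopy and the time-parametrised injectivity check for a standard surface-topology fact; it is arguably cleaner, and it isolates the role of the second half of the presimplicity hypothesis (it only serves to make $[c]$ primitive, enabling the $GL_2(\Z)$ normalisation). The paper's dynamic version has the mild advantage of not invoking the annulus classification and of being phrased without a choice of coordinates. Both arguments are complete.
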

\begin{Remark}\label{rmk:entknotendurchwechselnderhaelftederKreuzungen}In terms of knot diagrams Lemma~\ref{lemma:entknotendurchwechselnderhaelftederKreuzungen} means that
if a knot $K$ in $S^1\times S^1\times \R$ projects to a presimple diagram with $n$ crossings on $S^1\times S^1$,
then one can get the diagram of the unknot by changing at most $\lfloor\frac{n}{2}\rfloor$ of the $n$ crossings. 

To prove this, we use Lemma~\ref{lemma:entknotendurchwechselnderhaelftederKreuzungen} to get the unknot $O$ with the same diagram as $K$, except it differs in the choice of crossings.
If this new diagram differs from the original one in less than half of the crossings, we are done. Otherwise
we switch all 
crossings in the diagram of $O$ 
yielding a knot diagram of a knot $\overline{O}$.
The knot $\overline{O}$ is also unknotted, as the following shows.
Let $H_t$ be an isotopy 
that changes $O$ to a knot that projects to a simple closed curve on $S^1\times S^1$.
Then parametrize $\overline{O}$ in $S^1\times S^1\times\R$ exactly the same way as $O$,
except changing the sign in the $\R$ coordinate.
The same isotopy $H_t$ as for $O$ (with a change of sign in the last coordinate) shows that $\overline{O}$ 
is unknotted.
\end{Remark}

Clearly the assumption that $c$ is homotopic to a simple closed curve is necessary in Lemma~\ref{lemma:entknotendurchwechselnderhaelftederKreuzungen}.
We conjecture that
Lemma~\ref{lemma:entknotendurchwechselnderhaelftederKreuzungen} holds for all curves $c$ that are homotopic to a simple closed curve and,
furthermore, that Lemma~\ref{lemma:entknotendurchwechselnderhaelftederKreuzungen} generalizes to all surfaces.

\begin{proof}[Proof of Lemma~\ref{lemma:entknotendurchwechselnderhaelftederKreuzungen}]
Denote $S^1\times S^1$ by $F$. Our strategy is to construct a presimple homotopy $h_t$ of $c$ (meaning $h_t$ is presimple for every $t\in[0,1]$) to a simple closed curve
and then to find an isotopy $H_t$ of knots in $F\times \R$ that has $h_t$ as projection.

We first lift the curve $c$ to a mapping $\tilde{c}\colon\R\to \tilde{F}$, where $\varphi\colon\tilde{F}\to F$ denotes the universal covering map. 
Since $c$ is presimple,
$\tilde{c}\colon\R\to\tilde{F}$ is injective
and there exists a simple closed curve $g\colon[0,1]\to F$ that is homotopic to $c$. We take $g$ such that $g(0)=g(1)=c(0)=c(1)$
and denote by $\tilde{g}\colon\R\to\tilde{F}$ its lift to $\tilde{F}$ with $\tilde{g}(k)=\tilde{c}(k)$ for all $k\in\Z$.
Let $\tilde{h}_t\colon\R\to \tilde{F}$ be an equivariant\footnote{I.e.\ $\tilde{h}_t(s+1)=D(\tilde{h}_t(s))$ for all $s$ in $\R$, where $D$ denotes the unique deck transformation sending $\tilde{c}(0)$ to $\tilde{c}(1)$.}
isotopy between $\tilde{c}$ and $\tilde{g}$ that is constant on $\Z$, see Figure~\ref{fig:equivariantIsotopy}.
\begin{figure}[h]
\centering
\psfrag{c(0)=g(0)}{$\tilde{c}(0)=\tilde{g}(0)$}
\psfrag{c(1)=g(1)}{$\tilde{c}(1)=\tilde{g}(1)$}
\includegraphics[width=0.9\textwidth]{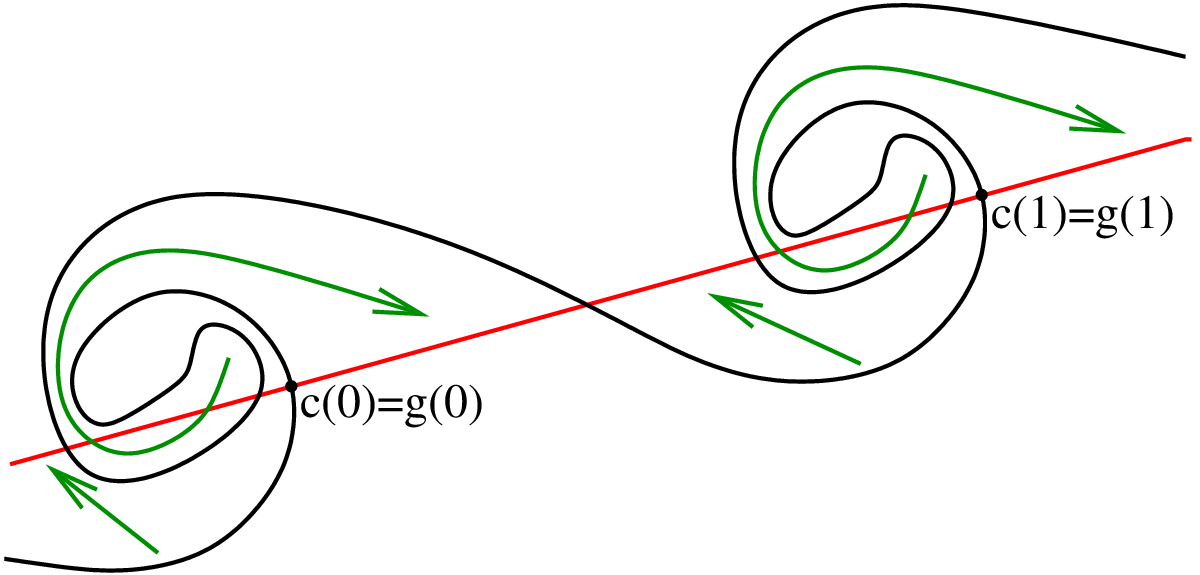}
\caption{An equivariant isotopy (green) of $\tilde{c}$ (black) to $\tilde{g}$ (red) is indicated.}
\label{fig:equivariantIsotopy}
\end{figure}
Of course $h_t=\varphi\circ\tilde{h}_t\colon[0,1]\to F$ is a presimple homotopy.

The idea for building $H_t$ is to measure how far away from $g$ points $p=h_t(s)$ are
and then to put this distance $d(p)$ in the second coordinate of $H_t$.
We need a metric to make this precise and the distance will actually be measured in the universal cover.
Put a Riemannian metric on $F$ with constant curvature 0
such that $g$ is a simple closed geodesic of length 1.
The universal cover $\tilde{F}$ is identified with the Euclidean plane $\R^2$
such that $\varphi\colon\tilde{F}\to F$ is locally an isometry.
Let $d\colon\tilde{F}\to\R$ denote the oriented distance
 to the straight line $\tilde{g}$.\footnote{Ordinary Euclidean distance of points in $\tilde{F}=\R^2$ to the straight line $\tilde{g}$
with a sign depending on whether the point is on the left or the right of $\tilde{g}$.}
We claim that the homotopy
\[
H_t\colon[0,1]\to F\times\R, s\mapsto (h_t(s),d(\tilde{h}_t(s))),
\]
which projects to the homotopy $h_t$ on $F$, is an isotopy.
This claim implies that $H_0\colon[0,1]\to F\times\R$ is an unknot $O$ that projects to $h_0=c$;
therefore, it finishes the proof.

In order to prove that $H_t$ is an isotopy,
we assume towards a contradiction that $H_t$ is not injective for some fixed $t$.
Without loss of generality we assume $t=0$, i.e.\ $\tilde{h}_t=\tilde{c}$.
If there exist $s \neq r\in[0,1)$ such that $H_0(s)=H_0(r)$, then, by definition of $H_0$, the points $\tilde{p_1}=\tilde{c}(s)$ and $\tilde{p_2}=\tilde{c}(r)$ in $\tilde{F}$ satisfy
\[\varphi(\tilde{p_1})=\varphi(\tilde{p_2})\quad\et\quad d(\tilde{p_1})=d(\tilde{p_2}).
\]
As $d(\tilde{p_1})=d(\tilde{p_2})$, there is a geodesic segment parallel to $\tilde{g}$ from $\tilde{p_1}$ to $\tilde{p_2}$.
The length of this segment is an integer $k$ since $\varphi(\tilde{p_1})=\varphi(\tilde{p_2})$.
It follows that $\tilde{p_2}=\tilde{c}(k+s)$ if the sign of $k$ is chosen correctly.
This is seen by lifting $c$ to $\tilde{F}$ such that the lift starts at $\tilde{g}(k)=\tilde{c}(k)$, see Figure~\ref{fig:Hisinjective} for a case with $k=1$.
\begin{figure}[h]
\centering
\psfrag{l}{$(l,0)$}
\psfrag{g}{\textcolor{red}{$\tilde{g}$}}
\psfrag{c}{$\tilde{c}$}
\psfrag{g(0)}{$\tilde{g}(0)$}
\psfrag{g(k)}{$\tilde{g}(k)$}
\psfrag{p1}{$\tilde{p}_1$}
\psfrag{p2}{$\tilde{p}_2$}
\psfrag{dp1}{\textcolor{OliveGreen}{$\vert d(\tilde{p}_1)\vert$}}
\psfrag{dp2}{\textcolor{OliveGreen}{$\vert d(\tilde{p}_2)\vert$}}
\psfrag{Liftofcstartingatg(k)}{\textcolor{blue}{Lift of $c$ starting at $\tilde{g}(k)$}}
\includegraphics[width=0.9\textwidth]{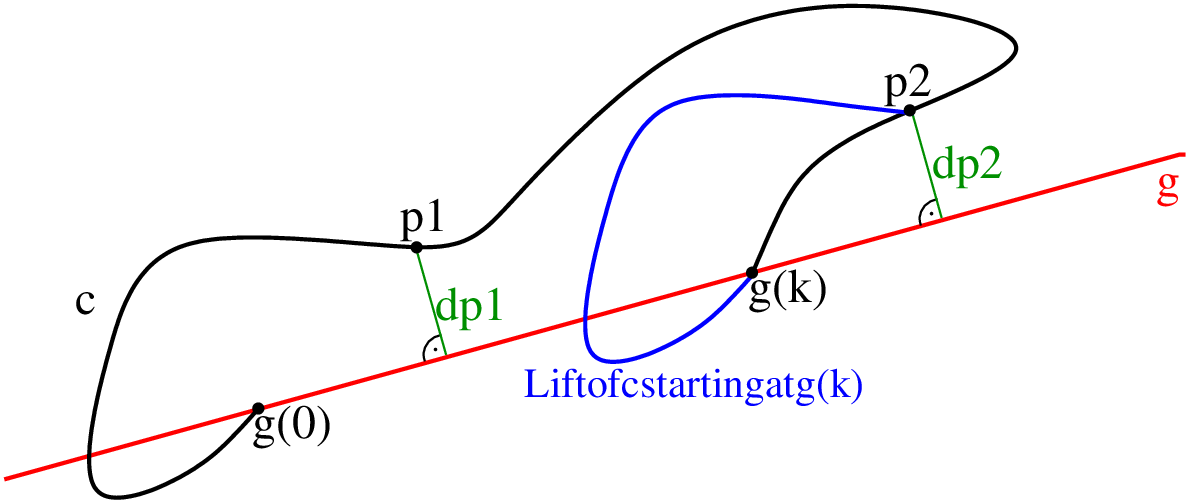}
\caption{The curve $\tilde{c}\vert_{[0,1]}$ (black) intersects $\tilde{c}\vert_{[k,k+s]}$ (blue) in $\tilde{p}_2$.}
\label{fig:Hisinjective}
\end{figure}
However, $\tilde{c}(r)=\tilde{c}(k+s)$ and $k+s \neq r$ contradict the injectivity of $\tilde{h}_t=\tilde{c}$.

\end{proof}
Let us shortly introduce notations and the general strategy for the proof of Theorem~\ref{thm:T(n,m)<T(a,b)ifn<am<b}. 
In the following $S^1\times S^1$ denotes the standard torus in $\R^3$ 
and $N(S^1\times S^1)$ a tubular neighborhood of $S^1\times S^1$.
Also, we denote the curve obtained by projecting a knot $K$ in $N(S^1\times S^1)$ to $S^1\times S^1$  by $\pi(K)$.
Such a curve $\pi(K)$ (together with crossing information) provides a knot diagram on $S^1\times S^1$ for
the knot $K$ in $N(S^1\times S^1)\cong S^1\times S^1\times\R$.

To show the adjacency $K_1\leq_G K_2$ for the knots $K_2=T(a,b)$ and $K_1=T(n,m)$,
i.e.\ to show that $d_G(K_2,K_1)$ is less than or equal to (and thus equal to) $u(K_2)-u(K_1)$, we proceed as follows.
We isotope $K_2$ and $K_1$ into $N(S^1\times S^1)$ in such a way that
\begin{enumerate}[(I)]
\item \label{eq:K1unknotted}$\pi(K_1)$ is simple closed (thus, $K_1$ is unknotted in $N(S^1\times S^1)$),
\item \label{eq:K1=K2} $K_2$ is homotopic to $K_1$ in $N(S^1\times S^1)$,
\item \label{eq:NrofcrossingsofK1} and $\pi(K_2)$ has $2(u(K_2)-u(K_1))$ crossings.
\end{enumerate}
In all our cases $\pi(K_2)$ will have an injective lift to the universal cover $\R^2$.
This together with \eqref{eq:K1unknotted} and \eqref{eq:K1=K2} 
yields that $\pi(K_2)$ is a presimple curve in $S^1\times S^1$.
Hence, Remark~\ref{rmk:entknotendurchwechselnderhaelftederKreuzungen} applies
and, because of \eqref{eq:NrofcrossingsofK1}, guaranties the existence of $u(K_2)-u(K_1)$ crossing changes in $N(S^1\times S^1)\cong S^1\times S^1\times\R$ changing $K_2$ to the unknot.
This unknot is homotopic to $K_1$ by \eqref{eq:K1=K2} and thus isotopic to $K_1$ by Remark~\ref{rmk:unknot}.

Before giving a proof of Theorem~\ref{thm:T(n,m)<T(a,b)ifn<am<b}, we illustrate this strategy in a concrete example.
\begin{Example}\label{Ex:T(3,5)<T(3,7)}
We show that $T(3,5)$ is Gordian adjacent to $T(3,7)$. 
Since $u(T(3,7))-u(T(3,5))=2$, we need to show that we can change $T(3,7)$ to $T(3,5)$ via 2 crossing changes.
First we isotope $T(3,7)$ into 
$N(S^1\times S^1)$ 
as shown on the left-hand side in Figure~\ref{fig:T(3,7)>T(3,5)}.
\begin{figure}[h]
\centering
\includegraphics[width=0.9\textwidth]{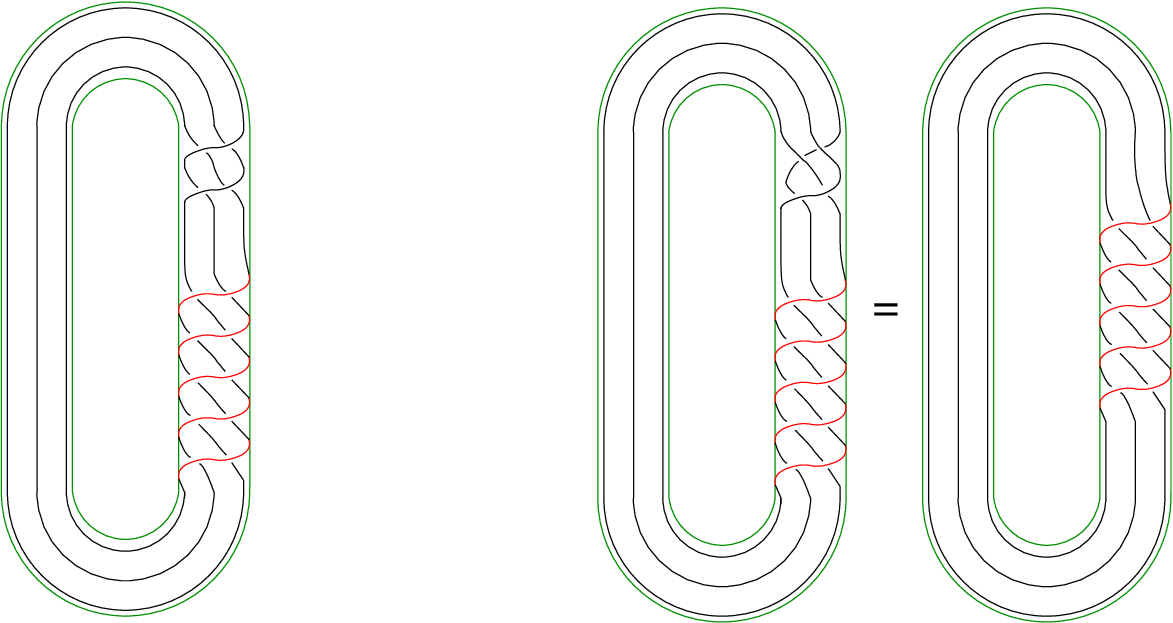}
\caption{Knots contained in a tubular neighborhood of the standard torus (green) that are homotopic in this neighborhood. Five arcs (red) are on the upper half of the torus,
the rest of the knots (black) lies on the lower half. Left: The knot $T(3,7)$ with $4$ crossings when projected on to the torus.
Right: Two isotopic (in a neighborhood of the torus) occurrences of the knot $T(3,5)$, one of them without crossings.}
\label{fig:T(3,7)>T(3,5)}
\end{figure}
Projecting this $T(3,7)$ to $S^1\times S^1$ yields a curve $\pi(T(3,7))$ with $4$ crossings.
The curve $\pi(T(3,7))$ is presimple since it has an injective lift to $\R^2$ and is homotopic to the standard embedding of the torus knot $T(3,5)$.
Thus, by Remark~\ref{rmk:entknotendurchwechselnderhaelftederKreuzungen} changing 2 of the crossings suffices to produce a knot $K$ in $N(S^1\times S^1)$
that is unknotted. As the knot $K$ and the standard $T(3,5)$ are homotopic unknots in $N(S^1\times S^1)$, they are isotopic in $N(S^1\times S^1)$ by Remark~\ref{rmk:unknot}.
In particular, $K$ and $T(3,5)$ are isotopic as knots in $\R^3$; thus, $d_G(T(3,5),T(3,7))=2$.
In this example with only 4 crossings one can quickly exhibit the knot $K$ explicitly. E.g.\ the right-hand side of Figure~\ref{fig:T(3,7)>T(3,5)} provides a knot $K$
that is obtained from the knot on the left-hand side of Figure~\ref{fig:T(3,7)>T(3,5)} by performing two crossing changes in $N(S^1\times S^1)$
and that is isotopic to the standard $T(3,5)$ as predicted by Remark~\ref{rmk:entknotendurchwechselnderhaelftederKreuzungen}. This last isotopy can be seen by applying braid relations
(similarly as in the proof of Proposition~\ref{prop:T(2,n)<T(3,m)}) and checking that these can be realized while staying within $N(S^1\times S^1)$.
\end{Example}

\begin{proof}[Proof of Theorem~\ref{thm:T(n,m)<T(a,b)ifn<am<b}]
By assumption we have pairs of coprime positive integers $(a,b)$ and $(n,m)$ such that $n\leq a$ and $m\leq b$. Without loss of generality we suppose that $a<b$ and $n<m$.

Let us first consider the case $n=a$, for which we proceed as in Example~\ref{Ex:T(3,5)<T(3,7)}.
We need to show that $d_G(T(a,b),T(n,m))$ is equal to
\[
u(T(a,b))-u(T(n,m))=\frac{(b-1)(a-1)}{2}-\frac{(m-1)(n-1)}{2}=\frac{(b-m)(a-1)}{2}.
\]
We consider the knot $T(a,b)$ as the closure of the braid $(\sigma_1\sigma_2\cdots\sigma_{a-1})^{b}$
and isotope it into a neighborhood $N(S^1\times S^1)$ of the standard torus $S^1\times S^1$ in $\R^3$. More precisely, we isotope $m$ arcs on the upper half of the torus and the rest of $T(a,b)$ on the lower half of the torus
in such a way that the curve $\pi(T(a,b))$ winds $m$ times around the core of $S^1\times S^1$ and $n=a$ times in the direction of the core of $S^1\times S^1$,
see left-hand side of Figure~\ref{fig:T(3,7)>T(3,5)}. Since $n$ and $m$ are coprime, there is a simple closed curve in $S^1\times S^1$ 
that is homotopic to $\pi(T(a,b))$ by the second part of Remark~\ref{rmk:unknot};
namely, the standard embedding of the torus knot $T(n,m)$ in $S^1\times S^1$. Also, $\pi(T(a,b))$ lifts injectively to the universal cover $\R^2$; thus, $\pi(T(a,b))$ is presimple.
The $m$ arcs do not intersect the rest of the curve $\pi(T(a,b))$ on the torus, so $\pi(T(a,b))$ has $(b-m)(a-1)$ crossings on the torus.
By Remark~\ref{rmk:entknotendurchwechselnderhaelftederKreuzungen} we need to change at most $\frac{(b-m)(a-1)}{2}$ crossings in the diagram on the torus
(which correspond to crossing changes in $N(S^1\times S^1)\cong S^1\times S^1\times\R$) to get an unknot $K$ in $N(S^1\times S^1)$. As the unknotted $K$ and the standard $T(n,m)$ are homotopic in $N(S^1\times S^1)$,
they are also isotopic by Remark~\ref{rmk:unknot}. Of course, $K$ is isotopic to $T(n,m)$ in $\R^3$
via the same isotopy as in $N(S^1\times S^1)$. 
Therefore, \[d_G(T(a,b),T(n,m))\leq \frac{(b-m)(a-1)}{2}\] as we wanted. The same argument works if $m=b$ or $a=m$.

This leaves the case $n<a$ and $m<b$. In the first case we interpreted $T(a,b)$ as the closure of a braid on $a$ strands, in the following we see $T(a,b)=T(b,a)$ as a braid on $b$ strands.
We may assume $m>b-a$, otherwise we replace (inductively) $a,b$ by $a,b-a$ (respectively by $b-a,a$ if $b-a<a$) since by the first case $T(a,b-a)\leq_GT(a,b)$.
To apply the same idea as before we reduce the braid on $b$ strands to one on $m$ strands. 
More precisely, the representation of $T(a,b)$ as the closure of the $b$-strand braid
\begin{equation}\label{eq:sigma=sigma'}
(\sigma_1\cdots\sigma_{b-1})^a=\sigma_{a}\cdots\sigma_1(\sigma_2\cdots\sigma_{b-1})^a
\end{equation}
 has the same closure as the $b-1$-strand braid
\[
\tau_{b-1}=\sigma_{a-1}\cdots\sigma_1(\sigma_1\cdots\sigma_{b-2})^a,
\]
see Figure~\ref{fig:sigmas}.
\begin{figure}[h]
\centering
\psfrag{n+1}{$a+1$}
\psfrag{n}{$a$}
\psfrag{m}{$b$}
\psfrag{m-1}{$b-1$}
\includegraphics[width=0.9\textwidth]{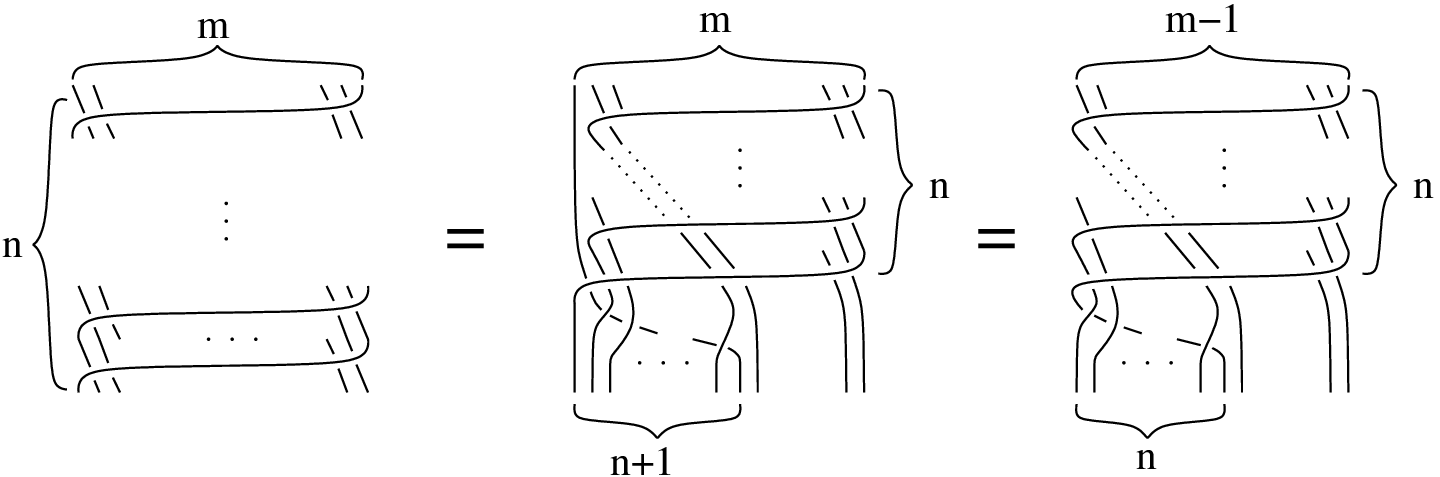}
\caption{The first equality is the pictorial version of equation \eqref{eq:sigma=sigma'}.
The second equality is a Markov destabilization, i.e.\ a Reidemeister I move on the closed braid.}
\label{fig:sigmas}
\end{figure}
If $m=b-1$, we isotope $T(a,b)$ (seen as the closure of $\tau_{b-1}$) into $N(S^1\times S^1)$ such that $n$ of the $a$ over-passing arcs in the right part of Figure~\ref{fig:sigmas} project to the upper half of the torus
and the rest of $\pi(T(a,b))$, including $a-1+(a-n)(b-2)$ crossings, lies on the lower half. The curve $\pi(T(a,b))$ is presimple since it winds $n$ respectively $m$ times around the torus,
i.e.\ it is homotopic in $N(S^1\times S^1)$ to the standard embedding of the knot $T(n,m)$, and $\pi(T(a,b))$ lifts injectively to $\R^2$.
Therefore, we can use Remark~\ref{rmk:entknotendurchwechselnderhaelftederKreuzungen} to get $T(n,m)$ by at most $\frac{a-1+(a-n)(b-2)}{2}$ crossing changes.
Thus, $d_G(T(n,m),T(a,b))$ is less or equal to
\[
\frac{a-1+(a-n)(b-2)}{2}=\frac{(a-1)(b-1)}{2}-\frac{(n-1)(b-2)}{2}=u(T(a,b))-u(T(n,m)).
\]

Now suppose $m<b-1$. We no longer isotope $T(a,b)$ into $N(S^1\times S^1)$. We first apply some crossing changes in $\R^3$ and then isotope the result into  $N(S^1\times S^1)$.
More precisely, we change a crossing in $\tau_{b-1}$ to get
\begin{equation}\label{eq:m<b-1}
\sigma_{a-1}\cdots\sigma_2\sigma_1^{-1}(\sigma_1\cdots\sigma_{b-2})^a
=\sigma_{a-1}\cdots\sigma_2\sigma_2\cdots\sigma_{b-2}(\sigma_1\cdots\sigma_{b-2})^{a-1}
\end{equation}
and then replace in \eqref{eq:m<b-1} the part $(\sigma_1\cdots\sigma_{b-2})^{a-1}$ by $\sigma_{a-1}\cdots\sigma_1(\sigma_2\cdots\sigma_{b-2})^{a-1}$ as in \eqref{eq:sigma=sigma'},
which has the same closure as the $b-2$ braid
\[
\tau_{b-2}=(\sigma_{a-2}\cdots\sigma_1\sigma_1\cdots\sigma_{b-3})^2(\sigma_1\cdots\sigma_{b-3})^{a-2},
\]
see Figure~\ref{fig:sigmas'}.
\begin{figure}[h]
\centering
\psfrag{n-1}{$a-1$}
\psfrag{->}{$\longrightarrow$}
\psfrag{n}{$a$}
\psfrag{m-1}{$b-1$}
\psfrag{m-2}{$b-2$}
\includegraphics[width=0.9\textwidth]{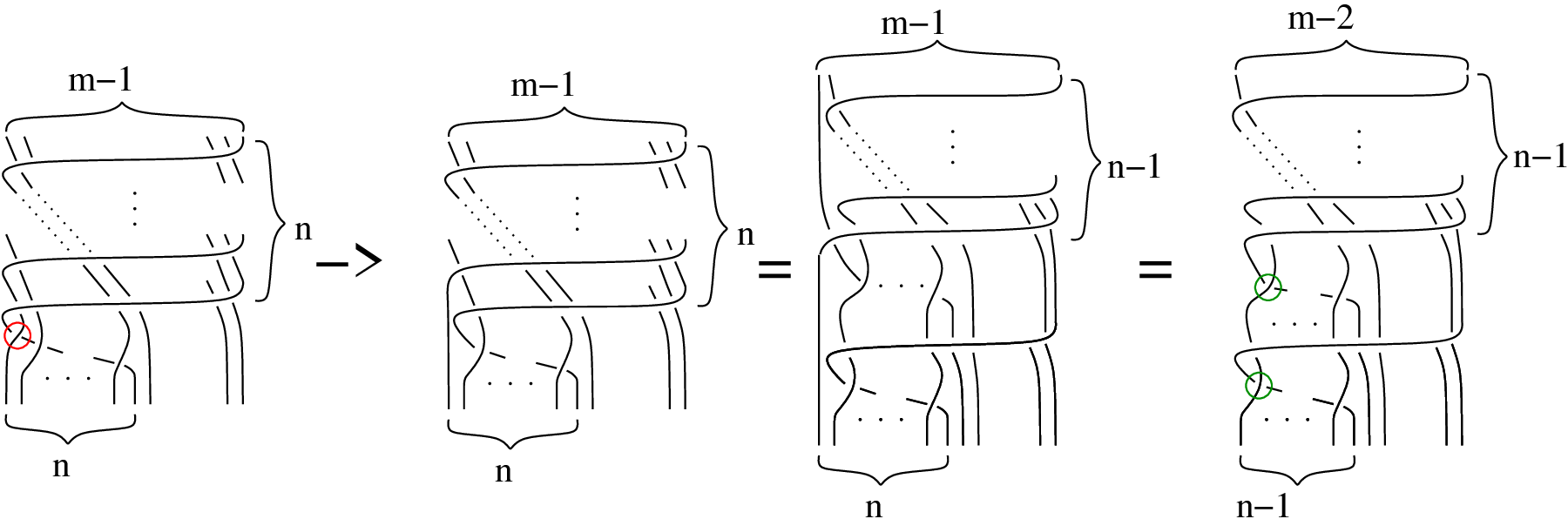}
\caption{The arrow $\longrightarrow$ indicates the changing of the marked (red) crossing. The two equalities are seen as in Figure~\ref{fig:sigmas}.
The two marked (green) crossings on the right side indicate the crossing changes that are necessary to obtain $\tau_{b-3}$ from $\tau_{b-2}$, which is needed when $m<b-2$.}
\label{fig:sigmas'}
\end{figure}
If $m=b-2$, we isotope the closure of $\tau_{b-2}$ into $N(S^1\times S^1)$ in such away that it is homotopic to $T(n,m)$;
namely, such that $n$ of the $a$ over-passing arcs get to lie on the upper part of the torus and the reminding part (including $2(a-2) + (a-n)(b-3)$ crossings) lies on the lower part.
Therefore, Remark~\ref{rmk:entknotendurchwechselnderhaelftederKreuzungen} implies that $T(n,m)$ can be obtained from the closure of $\tau_{b-2}$ by changing $\frac{ 2(a-2)+(a-n)(b-3)}{2}$ crossings.
Thus, $d_G(T(n,m),T(a,b))$ is less than or equal to \begin{align*}
 1 + \frac{ 2(a-2) + (a-n)(b-3)}{2}& = \frac{2a-2 + (a-n)(b-3)}{2}\\
&=\frac{2a-2 + (a-1)(b-3)}{2}-\frac{(n-1)(b-3)}{2}\\
&=\frac{(a-1)(b-1)}{2}-\frac{(n-1)(b-3)}{2}\\
&=u(T(a,b))-u(T(n,m)).\end{align*}

For general $m>b-a$, it follows similarly 
that we need to change \[1+2+\cdots +(b-m-1)=\frac{(b-m)(b-m-1)}{2}\] crossings of $T(a,b)$
to get the closure of the $m$ braid
\[
\tau_{m}=(\sigma_{a-(b-m)}\cdots\sigma_1\sigma_1\cdots\sigma_{m-1})^{b-m}(\sigma_1\cdots\sigma_{m-1})^{a-(b-m)},
\] see Figure~\ref{fig:sigmas''}.
\begin{figure}[h]
\centering
\psfrag{b}{$m$}
\psfrag{m-b}{$b-m$}
\psfrag{n-(m-b)}{$a-(b-m)$}
\psfrag{n-(m-b)+1}{$a-(b-m)+1$}
\includegraphics[width=0.9\textwidth]{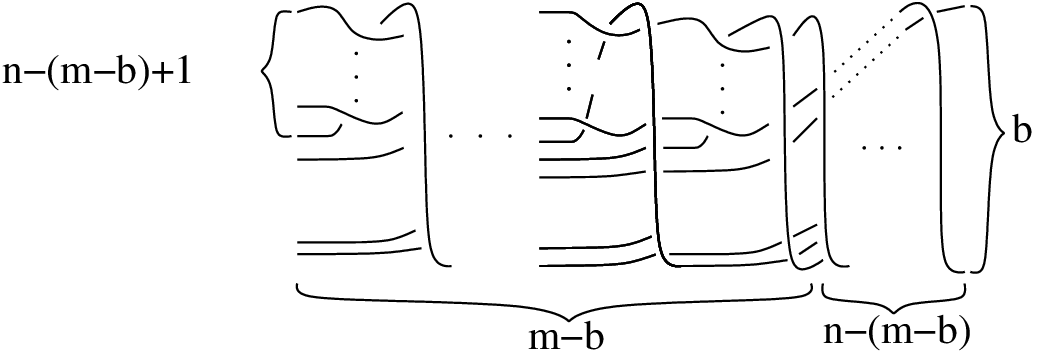}
\caption{The braid $\tau_m$, which can be obtained from $T(a,b)$ by $1+2+\cdots +(b-m-1)$ crossing changes.}
\label{fig:sigmas''}
\end{figure}
For example, the closure of $\tau_{b-3}$ is obtain from the closure of $\tau_{b-2}$ by the two crossing changes that are indicated (green) in Figure~\ref{fig:sigmas'}.
We isotope the closure of $\tau_m$ into $N(S^1\times S^1)$ such that $n$
 of the $a$ over-passing arcs lie on the upper half of the torus and $(b-m)(a-(b-m))+(a-n)(m-1)$ crossings on the lower half.
Therefore, we get $T(n,m)$ from $\tau_{m}$ by changing $\frac{(b-m)(a-(b-m))+(a-n)(m-1)}{2}$ crossings by Remark~\ref{rmk:entknotendurchwechselnderhaelftederKreuzungen}.
Combined we have that $d_G(T(n,m),T(a,b))$ is less than or equal to
\[
\frac{(b-m)(b-m-1)+(b-m)(a-(b-m))+(a-n)(m-1)}{2},
\]
which is equal to $u(T(a,b))-u(T(n,m)).$\end{proof}

\section{Levine-Tristram signatures as obstructions to adjacency}\label{sec:LTS}
The goal of this section is to prove Theorem~\ref{thm:T(2,n)<T(3,m)} using Levine-Tristram signatures \cite{levine}\cite{tristram}.
For torus knots, they are easy to calculate and yield good obstructions to adjacency,
see Lemma~\ref{lemma_signformula} and Proposition~\ref{prop:Sigundercrossingchange}, respectively.

\begin{Definition}\cite{levine}\cite{tristram}\label{Def:Sig}
Let $A$ be a Seifert matrix of a knot $K$ and $\omega$ in $S^1\backslash\{1\}\subset\C$.
The \emph{$\omega$-signature} $\sigma_\omega(K)\in\Z$ is defined to be
the number of positive eigenvalues minus the number of negative eigenvalues of the Hermitian matrix $(1-\omega)A+(1-\overline{\omega})A^t$.
\end{Definition}
The $\omega$-signature is independent of the choice of Seifert matrix, i.e.\ it is a link invariant. One has $\so=\s_{\overline{\omega}}$.
Setting $\omega=-1$ one recovers the classical signature $\sigma=\sigma_{-1}$. 

Note that there is an issue with sign conventions for the signatures (hidden in the Seifert matrix in Definition~\ref{Def:Sig}).
Our sign convention of signatures
is such that all (positive) torus knots have positive signature,
e.g.~$\sigma_{-1}(T(2,3))=2$ rather than $\sigma_{-1}(T(2,3))=-2$.

For a fixed link $L$, the signature $\so(L)$ is piecewise-constant in $\omega$, ``jumping'' at a finite number of $\omega$. 
For a Seifert matrix $A$ of a knot $K$, if $\omega$ is a root of unity of prime order, then $(1-\omega)A+(1-\overline{\omega})A^t$ is invertible, 
and so $\so(K)$ is even and $\so(K)$ does not jump at $\omega$.
From now on, every $\omega$ we consider is a root of unity of prime order. As roots of unity of prime order are dense in $S^1$, one only loses information on ``jumping''-points.



Let us denote by $s(K)$ the Rasmussen invariant of a knot $K$ \cite{rasmussen_sInv}. The next lemma
shows, how $\omega$-signatures and $s$ behave with respect to crossing changes.
\begin{lemma}
\label{lemma:Sigundercrossingchange} 
If $K_-$ is obtained from $K_+$ via one positive-to-negative crossing change
, then
\[
\sigma_\omega(K_-)\in\{\sigma_\omega(K_+),\sigma_\omega(K_+)-2\}
.\]
The same holds for the Rasmussen invariant.
\end{lemma}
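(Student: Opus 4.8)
The plan is to reduce the statement to a rank-one perturbation estimate for Hermitian matrices, so the whole argument rests on choosing Seifert matrices for $K_+$ and $K_-$ that differ in a very controlled way. First I would pass to the oriented resolution $K_0$ of the crossing that is being changed. Since the oriented smoothing of a crossing does not see its sign, both $K_+$ and $K_-$ are obtained from $K_0$ by attaching a single band at the crossing site, the two bands differing only by one half-twist. Choosing a connected Seifert surface $F_0$ for $K_0$ with Seifert matrix $A_0$ and adding the band produces Seifert surfaces $F_\pm$ for $K_\pm$ whose Seifert matrices take the block form
\begin{equation*}
A_\pm=\begin{pmatrix} A_0 & \mathbf{x}\\ \mathbf{y}^{\,t} & c_\pm\end{pmatrix},
\end{equation*}
where the new basis vector is the core of the band closed up through $F_0$. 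The off-diagonal data $\mathbf{x},\mathbf{y}$ record linking numbers of this new loop with the old generators and are insensitive to the internal twisting of the band, hence are the same for $K_+$ and $K_-$; only the self-linking $c_\pm$, the surface framing of the core, feels the half-twist. A positive-to-negative crossing change alters this framing by exactly one, and comparison with the reference example $T(2,3)\to O$ (where $\sigma_{-1}$ drops from $2$ to $0$) fixes the sign, so that $A_-=A_++ee^{t}$, with $e$ the last standard basis vector.

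Granting this, I would carry out the linear algebra. Writing $M_\omega(A)=(1-\omega)A+(1-\overline{\omega})A^t$ and letting $n_\pm$ denote the numbers of positive and negative eigenvalues, so that $\sigma_\omega=n_--n_+$, one computes
\begin{equation*}
M_\omega(A_-)=M_\omega(A_+)+\big[(1-\omega)+(1-\overline{\omega})\big]\,ee^{t}=M_\omega(A_+)+|1-\omega|^2\,ee^{t}.
\end{equation*}
As $\omega\neq 1$, the scalar $|1-\omega|^2$ is strictly positive, so $M_\omega(A_-)$ is a \emph{positive} rank-one perturbation of $M_\omega(A_+)$. By eigenvalue interlacing the eigenvalues weakly increase and interlace the old ones, so $n_+$ can only go up and by at most one. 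Here the hypothesis that $\omega$ be regular for both $K_+$ and $K_-$ is essential: it makes $M_\omega(A_+)$ and $M_\omega(A_-)$ nonsingular of the same size $N$, so no eigenvalue is stranded at $0$ and $n_++n_-=N$ for both. Expressing $\sigma_\omega=N-2n_+$ then gives $\sigma_\omega(K_-)-\sigma_\omega(K_+)=-2\big(n_+(M_\omega(A_-))-n_+(M_\omega(A_+))\big)\in\{0,-2\}$, which is exactly the assertion.

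The hard part is the geometric bookkeeping in the first paragraph: confirming that the band attachment changes a single diagonal entry of the Seifert matrix by exactly $+1$ and nothing else. The two delicate points are that the new generator can be arranged to traverse the twisted band only once, so the update is genuinely rank one rather than a more general $vv^{t}$, and that the net framing change is $\pm1$ with the sign pinned by the direction of the crossing change; both are cleanest to verify by drawing the local band model and cross-checking against a small example as above. For the Rasmussen invariant I would not reprove anything: the genus-one cobordism realizing a positive-to-negative crossing change, combined with the slice-genus bound for $s$, yields $s(K_+)-2\le s(K_-)\le s(K_+)$, and since $s$ is even on knots this forces $s(K_-)\in\{s(K_+),s(K_+)-2\}$, matching the signature case.
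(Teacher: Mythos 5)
Your signature argument is correct, but it takes a genuinely different route from the paper. You give the classical ``internal'' proof: build Seifert surfaces for $K_+$ and $K_-$ from one surface for the oriented resolution plus a band, so that the two Seifert matrices differ by $ee^{t}$ in a single diagonal entry, and then apply rank-one interlacing to the Hermitian forms $(1-\omega)A_{\pm}+(1-\overline{\omega})A_{\pm}^{t}$, using regularity only to guarantee nonsingularity so that $\sigma_\omega=N-2n_+$. This is essentially Murasugi's original method for $\sigma_{-1}$, extended to all regular $\omega$. The paper argues quite differently: it verifies the axioms of Lemma~\ref{lem_tauinv} (Livingston's formal argument) for $\tau=\sigma_\omega/2$ at roots of unity of prime order --- additivity under connected sum, Tristram's four-dimensional bound $\vert\sigma_\omega\vert\leq 2g_s$, and a twist-knot example with $\sigma_\omega=2$ --- and then reduces general regular $\omega$ to the prime-order case via piecewise constancy of $\omega\mapsto\sigma_\omega$. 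Your route avoids the four-dimensional input and that reduction entirely; what it costs is the Seifert-surface bookkeeping you rightly single out as the delicate point (including the degenerate case where the added band joins two components of $F_0$, no new generator appears, and $A_+=A_-$, so the difference is $0$). Both routes give the same conclusion, and your sign bookkeeping is consistent with the paper's convention ($T(2,3)\to O$ has $\sigma_{-1}$ dropping from $2$ to $0$).

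The one genuine gap is in your final sentence, on the Rasmussen invariant. The genus-one cobordism between $K_+$ and $K_-$ together with the cobordism bound for $s$ yields only the symmetric estimate $\vert s(K_+)-s(K_-)\vert\leq 2$, i.e.\ $s(K_-)\in\{s(K_+)-2,\,s(K_+),\,s(K_+)+2\}$; it does not rule out $s(K_-)=s(K_+)+2$. The one-sided inequality $s(K_-)\leq s(K_+)$ is exactly where the sign of the crossing change must enter, and it is not visible to an unoriented genus count: one needs either Rasmussen's own directional skein argument or the formal argument of Lemma~\ref{lem_tauinv}, whose third hypothesis (a knot with $\tau=1$ that is unknotted by a positive-to-negative crossing change, e.g.\ $T(2,3)$ with $s/2=1=g_s$) is precisely the missing positivity input. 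Since you intend to cite Rasmussen for this half of the lemma anyway, the fix is simply to cite his directional crossing-change inequality itself rather than derive it from the genus bound.
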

Rasmussen used an observation by Livingston~\cite[Corollary 2 and 3]{Livingston_Comp} to prove Lemma~\ref{lemma:Sigundercrossingchange} for $s$ \cite{rasmussen_sInv}.
For $\omega$-signatures, we only found proofs of the following weaker statement in the literature (see~\cite[Theorem 11.2.1]{kawauchi_knottheory} and~\cite{GambaudoGhys_BraidsSignatures}).
\begin{corollary}\label{cor:K1<K2->g(K2)-g(K1)>sigmdiff/2)}
Let $K_1$, $K_2$ be knots. 
Then
\[
\vert\frac{\sigma_\omega(K_2)-\sigma_\omega(K_1)}{2}\vert\leq d_G(K_1,K_2).
\] 

In particular, if $K_1$ is adjacent to $K_2$, then $\vert\frac{\sigma_\omega(K_2)-\sigma_\omega(K_1)}{2}\vert\leq u(K_2)-u(K_1)$.
\end{corollary}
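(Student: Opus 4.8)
The plan is to deduce the corollary from Lemma~\ref{lemma:Sigundercrossingchange} by concatenating crossing changes along a shortest sequence realizing the Gordian distance, using the piecewise-constancy of $\sigma_\omega$ in $\omega$ to sidestep the regularity issue at the intermediate knots. First I would set $d=d_g(K_1,K_2)$ and fix a shortest sequence of crossing changes $K_1=L_0\to L_1\to\cdots\to L_d=K_2$, where any two consecutive knots differ by a single crossing change. The goal is to show that $\sigma_\omega$ changes by at most $2$ in absolute value across each step and then to sum up.

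The immediate obstacle is that Lemma~\ref{lemma:Sigundercrossingchange} requires $\omega$ to be regular for both knots involved in each crossing change, whereas the intermediate knots $L_1,\dots,L_{d-1}$ need not have $\omega$ regular. To get around this I would exploit that each $L_i$ has only finitely many nonregular values on $S^1$ (the roots of its Alexander polynomial), so their union over $i=0,\dots,d$ is a finite subset of $S^1\setminus\{1\}$. Since each $\sigma_\omega(L_i)$ is piecewise-constant in $\omega$ and since $\omega$ is regular for $K_1=L_0$ and $K_2=L_d$, I can choose $\omega'$ in $S^1\setminus\{1\}$ sufficiently close to $\omega$ that is regular for every $L_i$ and that moreover satisfies $\sigma_{\omega'}(K_1)=\sigma_\omega(K_1)$ and $\sigma_{\omega'}(K_2)=\sigma_\omega(K_2)$; the latter two equalities hold because signatures are locally constant at regular points.

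With this $\omega'$ in hand I would apply Lemma~\ref{lemma:Sigundercrossingchange} to each of the $d$ steps. At each step, whichever of $L_i,L_{i+1}$ is the positive resolution of the changed crossing, the lemma gives $|\sigma_{\omega'}(L_{i+1})-\sigma_{\omega'}(L_i)|\leq 2$. Summing by the triangle inequality yields $|\sigma_{\omega'}(K_2)-\sigma_{\omega'}(K_1)|\leq 2d$, and substituting $\sigma_{\omega'}(K_i)=\sigma_\omega(K_i)$ for $i=1,2$ gives the desired bound $|\tfrac{1}{2}(\sigma_\omega(K_2)-\sigma_\omega(K_1))|\leq d_g(K_1,K_2)$. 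The ``in particular'' clause is then immediate: if $K_1\leq_g K_2$ then $d_g(K_1,K_2)=u(K_2)-u(K_1)$ by definition, so the same inequality holds with the right-hand side replaced by $u(K_2)-u(K_1)$. The only genuinely delicate point is the perturbation step, and I expect the main care to go into verifying that $\omega'$ can be chosen simultaneously regular for all intermediate knots while preserving the signatures at the two endpoints.
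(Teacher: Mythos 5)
Your proof is correct and is essentially the argument the paper intends: the paper gives no proof of this corollary beyond citing Gambaudo--Ghys, and the standard deduction is exactly yours, namely that $\sigma_\omega$ changes by at most $2$ across each crossing change in a minimal Gordian sequence by Lemma~\ref{lemma:Sigundercrossingchange}. Your perturbation of $\omega$ to an $\omega'$ regular for all intermediate knots is sound and matches the device the paper itself uses in the proof of Proposition~\ref{prop:Sigundercrossingchange} (there one takes $\omega'$ to be a nearby root of unity of prime order, which is regular for every knot).
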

We provide a proof of Lemma~\ref{lemma:Sigundercrossingchange} at the end of this section using a variation of Livingston's observation.

As a consequence of Corollary~\ref{cor:K1<K2->g(K2)-g(K1)>sigmdiff/2)}, we prove that most torus knots are not adjacent to torus knots of braid index two, as claimed in Remark~\ref{rem:T(3,m)not<T(2,n)}.
For braid index two torus knots, the signature equals twice the unknotting number,
that is
\[
\frac{\sigma_{-1}(T(2,n))}{2}=u(T(2,n))=\frac{n-1}{2}.
\]
This is also true for $T(3,4)$ and $T(3,5)$,
but for all other torus knots $T$, there is a signature defect, i.e.\ $u(T)>\frac{\sigma_{-1}(T)}{2}$. 
Thus, by Corollary~\ref{cor:K1<K2->g(K2)-g(K1)>sigmdiff/2)},
\[
d_G(T(2,n),T)\geq \frac{\sigma_{-1}(T(2,n))}{2}-\frac{\sigma_{-1}(T)}{2}>u(T(2,n))-u(T)
\]
for all torus knots $T$ not equal to $T(3,4)$, $T(3,5)$ or some $T(2,m)$.

The following proposition explains, how Lemma~\ref{lemma:Sigundercrossingchange} gives another obstruction to Gordian adjacency of torus knots, which is often better than Corollary~\ref{cor:K1<K2->g(K2)-g(K1)>sigmdiff/2)}. 

\begin{prop}\label{prop:Sigundercrossingchange}
Let $T_1\leq_GT_2$ be a Gordian adjacency of torus knots. 
Then $\sigma_\omega(T_1)\leq\sigma_\omega(T_2)$.
\end{prop}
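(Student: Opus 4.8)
The plan is to promote the adjacency to a full unknotting sequence of $T_2$ and to show that \emph{every} crossing change occurring in it is positive-to-negative; Lemma~\ref{lemma:Sigundercrossingchange} then forces $\sigma_\omega$ to be non-increasing along the sequence, which gives the claim because $T_1$ occurs after $T_2$.

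First I would record the sequence. By definition of $T_1\leq_gT_2$ there is an unknotting sequence
\[T_2=K_0\to K_1\to\cdots\to K_{u(T_2)}=O\]
of length $u(T_2)$ in which some $K_j$ equals $T_1$ (with $j=u(T_2)-u(T_1)$), each arrow being a single crossing change.

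The key step is to pin down the type of each crossing change, and here I would use the Rasmussen invariant rather than the signature \emph{precisely because} a torus knot can carry a signature defect (as the paper stresses) but carries no Rasmussen defect: by Rasmussen's theorem \cite{rasmussen_sInv} one has $s(T)=2g_s(T)=2u(T)$ for every positive torus knot, so $s(K_0)=2u(T_2)$ while $s(K_{u(T_2)})=s(O)=0$. By the version of Lemma~\ref{lemma:Sigundercrossingchange} for $s$, a single crossing change alters $s$ by an amount in $\{-2,0,2\}$, and summing over the $u(T_2)$ steps yields $s(O)-s(T_2)=-2u(T_2)$. Since each of the $u(T_2)$ increments is at least $-2$ and their sum equals $-2u(T_2)$, every increment must equal $-2$. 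A negative-to-positive change would raise $s$ by $0$ or $2$, so each $K_i\to K_{i+1}$ is necessarily positive-to-negative.

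Finally I would treat the signature, together with the technical point that $\omega$ is assumed regular only for $T_1$ and $T_2$, not for the intermediate $K_i$. Since $\sigma_\bullet(K)$ is locally constant at every $\omega$ regular for $K$, and since the set of $\omega\in S^1\setminus\{1\}$ non-regular for at least one of the finitely many knots $K_0,\dots,K_{u(T_2)}$ is finite, I can choose $\omega'$ arbitrarily close to $\omega$ that is regular for all the $K_i$ and satisfies $\sigma_{\omega'}(T_1)=\sigma_\omega(T_1)$ and $\sigma_{\omega'}(T_2)=\sigma_\omega(T_2)$. For this $\omega'$ every crossing change in the sequence is positive-to-negative between knots for which $\omega'$ is regular, so Lemma~\ref{lemma:Sigundercrossingchange} gives $\sigma_{\omega'}(K_{i+1})\in\{\sigma_{\omega'}(K_i),\sigma_{\omega'}(K_i)-2\}$, whence $\sigma_{\omega'}$ is non-increasing along the sequence. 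In particular $\sigma_{\omega'}(T_1)\leq\sigma_{\omega'}(T_2)$, and therefore $\sigma_\omega(T_1)\leq\sigma_\omega(T_2)$.

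The main obstacle is the third paragraph: recognizing that the classical signature cannot drive the argument and that the Rasmussen invariant is the correct substitute, then extracting from the length-$u(T_2)$ unknotting sequence the rigidity that forces every crossing change to be positive-to-negative. The perturbation in $\omega$ is a routine but necessary fix for the regularity hypothesis.
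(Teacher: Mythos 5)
Your proposal is correct and follows essentially the same route as the paper: the paper likewise uses $s(T)=2u(T)$ for torus knots to force every crossing change in the unknotting sequence to be positive-to-negative, and then perturbs $\omega$ to a nearby value (a root of unity of prime order in the paper, any value regular for all the $K_i$ in yours) before applying Lemma~\ref{lemma:Sigundercrossingchange} to get monotonicity of $\sigma$ along the sequence. Your write-up merely spells out the counting argument for why $s$ must drop by exactly $2$ at each step, which the paper leaves implicit.
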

\begin{proof}
For all torus knots $T$, we have $\frac{s(T)}{2}=u(T)$ \cite{rasmussen_sInv}.
Thus, Lemma~\ref{lemma:Sigundercrossingchange} yields that a minimal unknotting sequence of any torus knot involves only positive-to-negative crossing changes since $s$ has to drop by $2$ with every crossing change.
Choose an $\widetilde{\omega}$ that is regular for every knot and such that $\sigma_\omega(T_1)=\sigma_{\widetilde{\omega}}(T_1)$ and $\sigma_\omega(T_2)=\sigma_{\widetilde{\omega}}(T_2)$.
This is for example achieved by a root of unity of prime order that is close to $\omega$. Let 
\[
T_2=K_{u(T_2)}\to K_{u(T_2)-1}\to\cdots\to T_1\to\cdots\to K_1\to K_0=O
\]
be a minimal unknotting sequence for $T_2$ that contains $T_1$.
Since it involves only positive-to-negative crossing changes, we have
\[
\sigma_{{\omega}}(T_2)\geq\sigma_{{\omega}}(K_{u(T_2)-1})\geq\cdots \geq \sigma_{{\omega}}(T_1)\geq\cdots\geq\sigma_{{\omega}}(O)=0
\]
by Lemma~\ref{lemma:Sigundercrossingchange}. Therefore, $\sigma_\omega(T_1)
\leq
\sigma_\omega(T_2)$ holds.
\end{proof}
\begin{Remark}
By the above proof, Proposition~\ref{prop:Sigundercrossingchange} remains true for any knot $K$ with $\frac{s(K)}{2}=u(K)$.
For example, all knots that are closures of positive braids, which include algebraic knots. Instead of half the Rasmussen invariant, we could have used any other knot invariant that satisfies the conditions of Lem\-ma~\ref{lem_tauinv} and that is equal to the unknotting number
on torus knots, compare~\cite{Livingston_Comp}.
\end{Remark}
We prove Theorem~\ref{thm:T(2,n)<T(3,m)} using Proposition~\ref{prop:Sigundercrossingchange}
and the following combinatorial formula for the Levine-Tristram signatures of torus knots,
see \cite{GambaudoGhys_BraidsSignatures}, for $\sigma_{-1}$ it is originally due to Brieskorn and Hirzebruch
\cite{Brieskorn_DifftopovonSing}\cite{Hirzebruch_SingExoticBourbaki}.
We denote the cardinality of a finite set $S$ by $\sharp S$.
\begin{lemma}\label{lemma_signformula}
 Let $n\geq2$ and $m\geq2$ be coprime positive integers.
Set \[S=\{\frac{k}{n}+\frac{l}{m}\;|\;1\leq k\leq n-1 \text{ and } 1\leq l\leq m-1\}\subset [0,2].\]
Then for $\theta \in [0,1]$ we have
\begin{equation*}\label{eq_signformula}
\sigma_{e^{2\pi i\theta}}(T(n,m))=\sharp \left(S\cap[\theta,\theta+1]\right)-\sharp \left(S\backslash(\theta,\theta+1)\right).
\end{equation*}
\end{lemma}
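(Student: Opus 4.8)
The plan is to exploit that $T(n,m)$ is fibered, with fiber surface the Milnor fiber $F$ of the singularity $x^n-y^m$, and to read off $\sigma_\omega(T(n,m))$ from the Hermitian form $B_\omega=(1-\omega)V+(1-\overline{\omega})V^{t}$ on $H_1(F;\C)$, where $V$ is the Seifert form carried by $F$. The gain is that $H_1(F;\C)$ also supports the monodromy $h_*$, whose eigenvalues are the $(n-1)(m-1)$ pairwise distinct roots of the Alexander polynomial of $T(n,m)$, namely $\lambda_{k,l}=e^{2\pi i(k/n+l/m)}$ for $1\le k\le n-1$ and $1\le l\le m-1$. These are exactly the numbers $e^{2\pi i s}$ with $s\in S$, so the set $S$ is built into the monodromy; writing $\langle x,y\rangle=x^{*}B_\omega y$, the whole problem becomes the determination of $\operatorname{sign}\langle v,v\rangle$ on each eigenvector $v$.

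First I would record the fibered identity $Vv=\lambda V^{t}v$ on a $\lambda$-eigenvector $v$ of $h_*$; taking conjugate transposes gives $v^{*}V^{t}=\overline{\lambda}\,v^{*}V$, and a two-line computation then shows that eigenvectors for distinct eigenvalues are $B_\omega$-orthogonal. Since all $\lambda_{k,l}$ are simple, $B_\omega$ is diagonal in the eigenbasis, so that $\sigma_\omega(T(n,m))=-\sum_{k,l}\operatorname{sign}\langle v_{k,l},v_{k,l}\rangle$, the overall minus sign being forced by the paper's nonstandard convention. Next I would evaluate the diagonal entries. Setting $a=v^{*}V^{t}v$, the relation $\overline{a}=\lambda a$ forces $a=\rho\,e^{-i\pi s}$ for some $\rho\in\R$, and substituting $Vv=\lambda V^{t}v$ back into $\langle v,v\rangle$ collapses everything to $\langle v,v\rangle=\rho\cdot c\cdot \sin(\pi\theta)\sin\!\big(\pi(s+\theta)\big)$ with $c>0$ and $\omega=e^{2\pi i\theta}$. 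Hence each eigenvector contributes a sign controlled by $\operatorname{sign}(\rho_{k,l})$ together with the position of $s_{k,l}$ relative to $\theta$.

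The hard part is pinning down the signs $\rho_{k,l}$; this is precisely the classical input of Brieskorn \cite{Brieskorn_DifftopovonSing}. Using Pham's explicit basis of vanishing cycles one diagonalizes the intersection form of $F$ and finds that, with the specific lift $s_{k,l}=k/n+l/m\in[0,2]$, the numbers $\rho_{k,l}$ all share the same sign. Conceptually $\rho_{k,l}$ is the Krein sign of the eigenvalue $\lambda_{k,l}$ of the symplectic map $h_*$, and it is the uniformity of these signs that makes the count clean. I would fix the common sign to be positive by matching one known value, e.g.\ $\sigma_{-1}(T(2,3))=2$, which simultaneously calibrates the global convention.

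Finally I would assemble the formula. With $\rho_{k,l}>0$ the signed count equals $\sum_{s\in S}\operatorname{sign}\sin\!\big(\pi(s+\theta+1)\big)$, which is $+1$ exactly for $s\in(1-\theta,2-\theta)$ and $-1$ otherwise. The key combinatorial observation is that $S$ is invariant under the involution $s\mapsto 2-s$ (realized by $(k,l)\mapsto(n-k,m-l)$), and that this involution carries the interval $(1-\theta,2-\theta)$ onto $(\theta,\theta+1)$. Applying it term by term rewrites the count as $\sharp\big(S\cap(\theta,\theta+1)\big)-\sharp\big(S\setminus[\theta,\theta+1]\big)$, which for regular $\omega$ is exactly the asserted expression; the closed-versus-open asymmetry in the statement records the value at the nonregular $\omega$, where $\sigma_\omega$ jumps by $\pm2$. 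As a consistency check (and an alternative to the direct evaluation) I would note that both sides are locally constant, vanish as $\theta\to0^{+}$ by the same $s\mapsto 2-s$ symmetry, and jump at the common set $\{-s_{k,l}\bmod 1\}$, so that matching the jump directions via the signs $\rho_{k,l}$ already forces equality, recovering the combinatorial formula of Gambaudo--Ghys \cite{GambaudoGhys_BraidsSignatures}.
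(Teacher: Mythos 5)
The paper does not actually prove Lemma~\ref{lemma_signformula}: it is quoted from Gambaudo--Ghys \cite{GambaudoGhys_BraidsSignatures} (and, for $\sigma_{-1}$, Brieskorn \cite{Brieskorn_DifftopovonSing}), so there is no in-text argument to compare against. Your sketch is essentially a reconstruction of the proof in those references, and the formal part of it is correct: the orthogonality of monodromy eigenvectors for $B_\omega$, the relation $\bar a=\lambda a$ forcing $a=\rho e^{-i\pi s}$, and the identity $\langle v,v\rangle=4\rho\sin(\pi\theta)\sin(\pi(s+\theta))$ all check out, as does the final bookkeeping (the involution $s\mapsto 2-s$ carrying $(1-\theta,2-\theta)$ to $(\theta,1+\theta)$, and the closed/open asymmetry making boundary elements of $S$ contribute $1-1=0$, which correctly matches the vanishing of $\langle v,v\rangle$ on the degenerate directions at nonregular $\omega$). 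Be aware, though, that the entire content of the lemma is concentrated in the one step you defer: the claim that the Krein signs $\rho_{k,l}$ are all positive for the specific lift $s_{k,l}=k/n+l/m\in(0,2)$ and the specific monodromy convention $Vv=\lambda V^tv$. Everything else is linear algebra valid for any fibered knot with simple monodromy spectrum, so calibrating against the single value $\sigma_{-1}(T(2,3))=2$ does not by itself force uniformity of the signs across all $(k,l)$ and all $(n,m)$; a self-contained proof must actually diagonalize the Seifert form in Pham's basis of vanishing cycles (or, equivalently, carry out Brieskorn's equivariant signature computation). Since the paper itself outsources exactly this to the literature, your proposal is an acceptable proof outline, but you should state explicitly that the positivity of all $\rho_{k,l}$ is the theorem being cited rather than a routine verification.
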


\begin{proof}[Proof of Theorem~\ref{thm:T(2,n)<T(3,m)}]
Fix $n=2k+1$ and note that $m=\lfloor\frac{3}{2}k+1\rfloor$ is minimal with $n \leq \frac{4}{3}m+\frac{1}{3}$.
By Proposition~\ref{prop:T(2,n)<T(3,m)} we have \[T(2,2k+1)\leq_G T(3,\lfloor\frac{3}{2}k+1\rfloor).\]
Together with \[T(3,\lfloor\frac{3}{2}k+1\rfloor)\leq_GT(3,m)\text{ for all }m \geq \lfloor\frac{3}{2}k+1\rfloor,\]
an easy instance of Theorem~\ref{thm:T(n,m)<T(a,b)ifn<am<b},
we conclude that \[T(2,2k+1)\leq_GT(3,m)\text{ for all }m\geq\lfloor\frac{3}{2}k+1\rfloor.\]

For the other direction, we let $n=2k+1$ be any odd number and write $m=\lceil\frac{3}{2}k-1\rceil$,
which is the largest $m$ that does not satisfy $n \leq \frac{4}{3}m+\frac{1}{3}$.
Thus, we have to show that $T(2,2k+1)\nleq_GT(3,m)$.
For $k\leq4$, calculating unknotting numbers yields
\[T(2,5)\nleq_G T(3,2),T(2,7)\nleq_G T(3,4),\et T(2,9)\nleq_G T(3,5).\]
If $k\geq 5$,  we distinguish two cases.
Either, $k$ equals 1 or 2 modulo 4, or $k$ equals 3 or 4 modulo 4.

For $k = 1+4l,2+4l$ with $l\geq 1$, Murasugi's formula for torus knots of braid index $3$, see~\cite[Proposition 9.1]{Murasugi_OnClosed3Braids} or~\cite[Theorem 5.2]{GLM},
provides
\[\s(T(3,m))=2k-2,\]
which is strictly less than
\[\s(T(2,2k+1))=2k.\]
Thus, Proposition~\ref{prop:Sigundercrossingchange} yields $T(2,2k+1)\nleq_GT(3,m)$.

For $k = 3+4l,4+4l$ with $l\geq 1$, Murasugi's formula gives
\begin{equation}\label{eq:sigma(T(3,m))=2n=sigma(T(2,2n+1))}
\s(T(3,m))=2k=\s(T(2,2k+1)).
\end{equation}
In this case $\s$ does not suffice as obstruction directly,
but we use~\eqref{eq:sigma(T(3,m))=2n=sigma(T(2,2n+1))} to calculate $\so(T(3,m))$ for $\omega$ close to $-1$,
which yields the desired obstruction.
More precisely, set
\begin{align}\label{eq:omega}
\omega=e^{2\pi i\theta}\text{ with}\left\{\begin{array}{l}
                 \theta\in(\frac{1}{2}-\frac{2}{3m},\frac{1}{2}-\frac{1}{3m}) \;\text{ for $m$ even}, \text{ i.e.\  } k=3+4l\\
		\theta\in(\frac{1}{2}-\frac{3}{6m},\frac{1}{2}-\frac{1}{6m}) \;\text{ for $m$ odd}, \text{ i.e.\  } k=4+4l
                \end{array}\right. .
\end{align}
By Lem\-ma~\ref{lemma_signformula} the value of $\so(T(3,m))$ is the same for all these $\omega$.
\begin{Claim}\label{claim_sigma}
For all $m=4+6l,5+6l$ with $l\geq 1$ and $\omega$ as in \eqref{eq:omega},
we have $\so(T(3,m))=\s(T(3,m))-2$.
\end{Claim}
Lemma~\ref{lemma_signformula} shows that the above $\omega$ can be chosen such that \[\s(T(2,2k+1))=\so(T(2,2k+1)).\]
Hence, Claim~\ref{claim_sigma} and~\eqref{eq:sigma(T(3,m))=2n=sigma(T(2,2n+1))} yield
\begin{align*}
\so(T(3,m)) &= \s(T(3,m))-2 < \s(T(3,m))\\
 &= \s(T(2,2k+1)) = \so(T(2,2k+1)).
\end{align*}
Therefore, $T(2,2k+1)\nleq_GT(3,m)$ by Proposition~\ref{prop:Sigundercrossingchange}.
It remains to prove Claim~\ref{claim_sigma}.

For the case when $m$ is even,
Lemma~\ref{lemma_signformula} applied to the knot
\[T=T(3,m)=
T(3,4+6l)
\]
yields that $\s(T)$ is
\[
\sharp \left(S\cap[\tfrac{1}{2}-\tfrac{1}{3m}+\epsilon,\tfrac{3}{2}-\tfrac{1}{3m}+\epsilon]\right)-\sharp \left(S\backslash(\tfrac{1}{2}-\tfrac{1}{3m}+\epsilon,\tfrac{3}{2}-\tfrac{1}{3m}+\epsilon)\right)
\]
and that $\so(T)$ is
\[
\sharp \left(S\cap[\tfrac{1}{2}-\tfrac{1}{3m}-\epsilon,\tfrac{3}{2}-\tfrac{1}{3m}-\epsilon]\right)-\sharp \left(S\backslash(\tfrac{1}{2}-\tfrac{1}{3m}-\epsilon,\tfrac{3}{2}-\tfrac{1}{3m}-\epsilon)\right)
\]
for $\epsilon$ small enough.
Observe that
\begin{align*}
\frac{3}{2}-\frac{1}{3m}&=
\frac{2}{3}+\frac{5m-2}{6m}=\frac{2}{3}+\frac{5(4+6l)-2}{6(4+6l)}\\
&=\frac{2}{3}+\frac{3+5l}{4+6l}=\frac{2}{3}+\frac{3+5l}{m}\in S \\
\et\frac{1}{2}-\frac{1}{3m}&=\cdots 
=\frac{1}{3}+\frac{l}{m}+\frac{1}{3}\frac{1}{m}\notin S.
\end{align*}
This means
\[
\left(S\cap[\tfrac{1}{2}-\tfrac{1}{3m}-\epsilon,\tfrac{3}{2}-\tfrac{1}{3m}-\epsilon]\right) \dot{\cup} \{\tfrac{3}{2}-\tfrac{1}{3m}\}=S \cap [\tfrac{1}{2}-\tfrac{1}{3m}+\epsilon,\tfrac{3}{2}-\tfrac{1}{3m}+\epsilon]
\]
and
\[
S\backslash(\tfrac{1}{2}-\tfrac{1}{3m}-\epsilon,\tfrac{3}{2}-\tfrac{1}{3m}-\epsilon)=
\left(S \backslash (\tfrac{1}{2}-\tfrac{1}{3m}+\epsilon,\tfrac{3}{2}-\tfrac{1}{3m}+\epsilon)\right)\dot{\cup} \{\tfrac{3}{2}-\tfrac{1}{3m}\}.
\]
Therefore, $\s(T)=2+\so(T)$ holds.

If $m$ is odd, we have $m=5+6l$.
Similarly to the even case, we get
\[
\frac{3}{2}-\frac{1}{6m}
=\frac{2}{3}+\frac{4+5l}{m}
\in S, \text{ but }\frac{1}{2}-\frac{1}{6m}\notin S.
\]
The rest of the argument is as in the case when $m$ is even.\end{proof}
It remains to prove Lemma~\ref{lemma:Sigundercrossingchange}.
Let $-K$ denote the mirror image of a knot $K$ (with reversed orientation), and let $K_1\sharp K_2$ denote the connected sum of two knots $K_1$ and~$K_2$.
\begin{lemma} \label{lem_tauinv}
Let $\tau$ be a integer valued knot invariant satisfying
\begin{itemize}
 \item $\tau(K_1\sharp K_2)=\tau(K_1)+\tau(K_2)$ and $\tau(-K_1)=-\tau(K_1)$ for all knots $K_1$ and $K_2$,
 \item $\tau(K)\leq g_s(K)$ for all knots $K$,
 \item there exists a knot $K$ with $\tau(K)=1$ that can be transformed to the unknot $O$ by a positive-to-negative crossing change.
\end{itemize}
Then $\tau$ is a concordance invariant,
$\vert\tau(K)\vert\leq g_s(K)$ for all knots $K$, and
\[
0\leq\tau(K_+)-\tau(K_-)\leq1,
\]
whenever $K_-$ is a knot obtained from $K_+$ by a positive-to-negative crossing change.
\end{lemma}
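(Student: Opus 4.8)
The plan is to establish the three conclusions in the order listed, since each relies on the earlier ones, and to isolate the direction of the crossing-change inequality as the single delicate point.

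First I would compute $\tau$ on slice knots. If $S$ is slice then $g_s(S)=g_s(-S)=0$, so the hypothesis $\tau\leq g_s$ gives both $\tau(S)\leq 0$ and $\tau(-S)=-\tau(S)\leq 0$, whence $\tau(S)=0$. Concordance invariance is then immediate: if $K_1$ and $K_2$ are concordant, then $K_1\#(-K_2)$ is slice, so additivity yields $\tau(K_1)-\tau(K_2)=\tau(K_1\#(-K_2))=0$. The two-sided bound $|\tau(K)|\leq g_s(K)$ comes from the same mirror trick: $\tau(K)\leq g_s(K)$ is assumed, and $-\tau(K)=\tau(-K)\leq g_s(-K)=g_s(K)$ supplies the missing lower bound.

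Next I would treat the magnitude of the crossing-change inequality. A crossing change is realized by a genus-one cobordism between $K_+$ and $K_-$, so $K_+\#(-K_-)$ bounds a surface of genus one in $B^4$ and $g_s(K_+\#(-K_-))\leq 1$. Combining additivity with the bound $|\tau|\leq g_s$ just proved gives $|\tau(K_+)-\tau(K_-)|=|\tau(K_+\#(-K_-))|\leq 1$. Crucially, this cobordism input is symmetric in $K_+$ and $K_-$, so it can only produce the absolute value $|\tau(K_+)-\tau(K_-)|\leq 1$ and never the sign.

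The sign $\tau(K_-)\leq\tau(K_+)$ is therefore the heart of the matter, and it is exactly where the third hypothesis enters: positive-to-negative and negative-to-positive crossing changes bound $\tau$ identically through cobordism-genus estimates, so some genuinely oriented input is needed to break the symmetry. Here I would invoke the variation of Livingston's observation. Factoring the genus-one cobordism through the oriented resolution $K_0$ into two saddle moves, and comparing this local picture with the model crossing change furnished by the third hypothesis (call it $J\to O$, with $\tau(J)=1$, for which $\tau$ drops by exactly $\tau(J)-\tau(O)=1$ in the positive-to-negative direction), forces the inequality to run as $\tau(K_-)\leq\tau(K_+)$ rather than in reverse; together with the magnitude bound this gives $0\leq\tau(K_+)-\tau(K_-)\leq 1$. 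I expect this last step to be the main obstacle: its content is that the sign of the crossing controls the sign of $\tau(K_+)-\tau(K_-)$, and, as noted, this cannot be extracted from symmetric cobordism bounds alone—it is precisely the normalization $\tau(J)=1$ realized by a positive-to-negative crossing change that fixes the direction.
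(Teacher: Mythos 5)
Your treatment of the first two conclusions is correct and is exactly the standard argument (slice knots have $\tau=0$ by applying $\tau\leq g_s$ to $S$ and $-S$; concordance invariance and $\vert\tau\vert\leq g_s$ follow by additivity and mirroring; the genus-one cobordism gives $\vert\tau(K_+)-\tau(K_-)\vert\leq 1$). You also correctly diagnose that the only remaining issue is the sign, and that no symmetric cobordism bound can produce it. But at precisely that point your proof stops being a proof: ``factoring the cobordism through the oriented resolution and comparing this local picture with the model crossing change $J\to O$ forces the inequality to run as $\tau(K_-)\leq\tau(K_+)$'' is an assertion, not an argument. Nothing you have written explains \emph{how} the existence of a single knot $J$ with $\tau(J)=1$ constrains $\tau$ across an unrelated crossing change elsewhere; some geometric statement linking the two crossing changes is indispensable, and you never formulate one.

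The missing ingredient --- which is the actual content of Livingston's Corollary 3 that the paper instructs the reader to adapt (replacing $T(2,3)$ by the model knot $K$) --- is a cancellation lemma for double points of opposite sign. Since $K\to O$ is a positive-to-negative crossing change, its mirror $-K\to O$ is a negative-to-positive one; hence the track of the simultaneous homotopy $K_+\sharp(-K)\to K_-\sharp(-O)=K_-$ is an immersed annulus with one positive and one negative double point, and such an algebraically cancelling pair can be traded for a single handle, yielding an \emph{embedded genus-one} cobordism from $K_+\sharp(-K)$ to $K_-$. (Resolving the two double points separately costs genus two and is not enough.) This gives $\vert\tau(K_+)-\tau(K)-\tau(K_-)\vert\leq 1$, i.e.\ $0\leq\tau(K_+)-\tau(K_-)\leq 2$ using $\tau(K)=1$, which intersected with your symmetric bound $\vert\tau(K_+)-\tau(K_-)\vert\leq1$ gives the claim. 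Without this lemma (or an equivalent sign-sensitive geometric input), the conclusion $\tau(K_-)\leq\tau(K_+)$ does not follow from your argument.
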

Lemma~\ref{lem_tauinv} is a variation of the statement in \cite[Corollary 2 and 3]{Livingston_Comp}.
The first two assertions are given in \cite[Corollary 2]{Livingston_Comp}.
The proof of the third assertion given in~\cite{Livingston_Comp} needs to be modified as follows to yield a proof Lemma~\ref{lem_tauinv}.
In the proof of~\cite[Corollary 3]{Livingston_Comp}, replace
the knot $T(2,3)$ by a knot $K$ with $\tau(K)=1$ that can be unknotted by changing one positive crossing to a negative one.
This is necessary since we do not want to assume that $\tau(T(2,3))=1$.

We prove Lemma~\ref{lemma:Sigundercrossingchange} by checking the three conditions of Lemma~\ref{lem_tauinv} for $\tau$ equal to $\frac{s}{2}$ and $\frac{\so}{2}$.
\begin{proof}[Proof of Lemma~\ref{lemma:Sigundercrossingchange}]
Rasmussen proves all conditions of Lem\-ma~\ref{lem_tauinv} for $\frac{s}{2}$ in \cite{rasmussen_sInv}
(note that $\frac{s(T(2,3))}{2}=1$).

For $\omega$-signatures, $\sigma_\omega(K_1\sharp K_2)=\sigma_\omega(K_1)+\sigma_\omega(K_2)$ and $\sigma_\omega(-K_1)=-\sigma_\omega(K_1)$
follow from the fact that $A_1\oplus A_2$ is a Seifert matrix for $K_1\sharp K_2$
and $-A_1$ is a Seifert matrix for $-K_1$
if $A_1$ and $A_2$ are Seifert matrices for $K_1$ and $K_2$, respectively.
If $\omega=-1$, we can choose $K$ to be $T(2,3)$ for the third condition since $\s(T(2,3))=2$.
In general, fix a root of unity $\omega$ of prime order.
For a positive integer $k$, let $T(2k-1)$ be the positive twist knot with $2k-1$ half-twists, see Figure~\ref{fig:twistknot}.
\begin{figure}[h]
\centering
\includegraphics[width=0.5\textwidth]{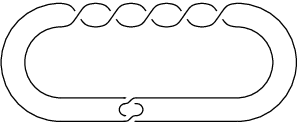}
\caption{The twist knot $T(5)$.}
\label{fig:twistknot}
\end{figure}
These knots can be unknotted by a positive-to-negative crossing change and
\[A=\left[\begin{matrix} k&1\\
0&1\end{matrix}\right]\]
is a Seifert matrix for $T(2k-1)$.
For sufficiently large $k$, both eigenvalues of the Hermitian matrix $(1-\omega) A +(1-\overline{\omega})A^t$ are positive.
Thus, choosing $K$ to be $T(2k-1)$ for a sufficiently large $k$, provides the third condition. 
\end{proof}

\section{A bound on Gordian adjacency for torus knots of higher braid indices}\label{sec:higherIndex&Asymtotics}
This section 
is concerned with the question,
when is $T(a,n)\leq_GT(b,m)$ for fixed $a<b$ and $n,m$ large?
Concretely, we study the numbers
\[
\underline{c}(a,b)=\lim\inf_{m\to\infty}{\frac{n(m)}{m}}\et\overline{c}(a,b)=\lim\sup_{m\to\infty}{\frac{n(m)}{m}},
\]
where $n(m)$ denotes the largest integer such that $T(a,n(m))\leq_GT(b,m)$.
We suspect, but cannot prove, that \[\underline{c}(a,b)=\overline{c}(a,b)\text{ for all }a < b\in\N.\]
Certainly $\underline{c}(2,3)=\overline{c}(2,3)=\frac{4}{3}$ by Theorem~\ref{thm:T(2,n)<T(3,m)}.
Also note that $1\leq \underline{c}(a,b)$ by Theorem~\ref{thm:T(n,m)<T(a,b)ifn<am<b}
and $\overline{c}(a,b)\leq \frac{b-1}{a-1}$ since
\[
\frac{(a-1)(n(m)-1)}{2}=u(T(a,n(m)))\leq u(T(b,m))=\frac{(b-1)(m-1)}{2}.
\]
Using $\omega$-signatures we get an upper bound for $\overline{c}(a,b)$ that is strictly better than $\frac{b-1}{a-1}$.

\begin{prop}\label{prop:upperboundforGadjacency} If $a\leq b\in\N$, then
\[
\overline{c}(a,b)\leq \frac{a\lceil\frac{b}{a}\rceil^2-(a+2b)\lceil\frac{b}{a}\rceil+b(b+1)}{(a-1)b}\leq\frac{b}{a}.
\]
\end{prop}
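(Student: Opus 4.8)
The plan is to use Proposition~\ref{prop:Sigundercrossingchange} as the central obstruction: whenever $T(a,n)\leq_g T(b,m)$, we must have $\sigma_\omega(T(a,n))\leq\sigma_\omega(T(b,m))$ for every $\omega$ that is regular for both knots. The strategy is to choose a \emph{fixed} convenient $\omega=e^{2\pi i\theta}$ (independent of $m$ but depending on $a,b$), compute the asymptotic growth rate of $\sigma_\omega(T(a,n))$ and $\sigma_\omega(T(b,m))$ in $n$ and $m$ respectively using the combinatorial formula of Lemma~\ref{lemma_signformula}, and then extract the bound on $\overline{c}(a,b)=\limsup n(m)/m$ by comparing these rates. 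The point is that the plain signature $\sigma_{-1}$ only gives the weak bound $\frac{b-1}{a-1}$, so I must pick a better $\theta$ that makes $\sigma_\omega(T(a,n))$ grow as fast as possible relative to its unknotting number while keeping $\sigma_\omega(T(b,m))$ small.

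First I would establish the asymptotic density of signature. For a torus knot $T(p,q)$ with $q$ large, Lemma~\ref{lemma_signformula} says $\sigma_{e^{2\pi i\theta}}(T(p,q))$ counts points of $S=\{\frac kp+\frac lq\}$ inside $[\theta,\theta+1]$ minus those outside, and as $q\to\infty$ the points $\frac kp+\frac lq$ equidistribute so that $\frac{1}{pq}\sigma_{e^{2\pi i\theta}}(T(p,q))$ converges to a limit $\Sigma_p(\theta)$ depending only on $p$ and $\theta$. Concretely, for each fixed $k\in\{1,\dots,p-1\}$ the fractional contributions $\frac kp+\frac lq$ as $l$ ranges sweep out an interval of length $\approx 1$ near $\frac kp$, and one computes the net count (inside minus outside $[\theta,\theta+1]$) as an explicit piecewise-linear function of $\theta$. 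I would record this limiting density $\Sigma_p(\theta)$ as a clean formula, which is where the expression $a\lceil b/a\rceil^2-(a+2b)\lceil b/a\rceil+b(b+1)$ will originate once $\theta$ is optimized.

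Next, writing $n=n(m)$, the necessary condition $\sigma_\omega(T(a,n))\leq\sigma_\omega(T(b,m))$ becomes, after dividing by $m$ and passing to the limsup, an inequality of the form $\frac na\cdot\Sigma_a(\theta)\lesssim b\,\Sigma_b(\theta)$, giving $\overline{c}(a,b)\leq \frac{b\,\Sigma_b(\theta)}{a\,\Sigma_a(\theta)}$ for every admissible $\theta$ (one must keep $\theta$ regular in the limit, i.e.\ avoid the accumulation of roots of the Alexander polynomials, which is automatic for generic $\theta$). To get the sharpest bound I then \emph{minimize the right-hand side over $\theta$}. I expect the optimal $\theta$ to sit near $\frac12\pm\frac{1}{2a}$ or at a value dictated by $\frac{\lceil b/a\rceil}{b}$, where $\Sigma_a(\theta)$ is large but $\Sigma_b(\theta)$ is held down; plugging this optimum into $\frac{b\Sigma_b(\theta)}{a\Sigma_a(\theta)}$ should collapse to the stated rational function. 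The final inequality $\le\frac ba$ is then a routine verification that the optimized value never exceeds $\frac ba$, most cleanly checked by writing $\lceil b/a\rceil=\frac ba+\delta$ with $\delta\in[0,1)$ and bounding.

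The main obstacle is the careful bookkeeping in the limiting signature density $\Sigma_p(\theta)$ and the optimization over $\theta$: Lemma~\ref{lemma_signformula} is a count with two competing boundary effects (the $[\theta,\theta+1]$ window and its complement), so I must be precise about which lattice points $\frac kp+\frac lq$ fall in which region and how the count behaves as $\theta$ crosses the critical values $\frac kp$. Getting the piecewise-linear function exactly right, and then verifying that the minimizing $\theta$ is the one producing $\lceil b/a\rceil$, is the delicate step; by contrast, once the density formula and the optimal $\theta$ are in hand, deriving the bound and checking $\le\frac ba$ are straightforward algebra. I would therefore spend most of the effort proving the density convergence and identifying the optimizer, and treat the rest as verification.
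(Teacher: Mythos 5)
Your proposal follows essentially the same route as the paper: it uses Proposition~\ref{prop:Sigundercrossingchange} as the obstruction, compares the linear-in-$m$ (resp.\ $n$) growth of $\sigma_{e^{2\pi i\theta}}$ for torus knots of fixed index, and optimizes over $\theta$; the paper simply imports the needed asymptotic as the Gambaudo--Ghys approximation (rather than rederiving it from Lemma~\ref{lemma_signformula}) and takes $\theta=\frac{1}{a}$, which is the optimizer your "delicate step" would have to identify (your guess $\lceil b/a\rceil/b$ is the right breakpoint interval, and the ratio is monotone there, so the minimum lands at $\frac1a$). Both treatments handle the regularity of $\omega=e^{2\pi i/a}$ the same way, by perturbing $\theta$ and passing to the limit.
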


A calculation shows that $\frac{a\lceil\frac{b}{a}\rceil^2-(a+2b)\lceil\frac{b}{a}\rceil+b(b+1)}{(a-1)b}=\frac{b}{a}$
if and only if $a$ divides $b$. 
If for example $b-a$ equals $1$,
Proposition~\ref{prop:upperboundforGadjacency} yields
\[
\overline{c}(a,a+1)\leq \frac{a+2}{a+1}.
\]
This is better than $\frac{b}{a}=\frac{a+1}{a}$ or even $\frac{b-1}{a-1}$,
but we only know it to be optimal for $a=2$; namely, $\underline{c}(2,3)=\overline{c}(2,3)=\frac{4}{3}$.
Note that Proposition~\ref{prop:upperboundforGadjacency} is strictly better than what one gets using the classical signature.
For example, the signature provides only $\overline{k}(3,4)\leq\frac{3}{2}$ since $\s(T(3,n))\sim \frac{4}{3}n$ and $\s(T(4,m))\sim2m$ by~\cite[Proposition 9.1, Proposition 9.2]{Murasugi_OnClosed3Braids},
which is the same factor one gets when using that the unknotting number has to decrease.

\begin{proof} We use the following approximation by Gambaudo and Ghys~\cite[Proposition 5.2]{GambaudoGhys_BraidsSignatures}.
Let $l$ be a positive integer and $\theta$ a real number with $\frac{l-1}{b}<\theta\leq\frac{l}{b}$, then
\begin{equation}\label{eq:asyptoticomegasigfortoruslinks}
\left|\s_{e^{2\pi i\theta}}(T(b,m))-m\left(2(b-(2l-1))\theta+\frac{2l(l-1)}{b}\right)\right|\leq 2b.
\end{equation}

Proposition~\ref{prop:Sigundercrossingchange} yields $\so(T(b,m))-\so(T(a,n(m)))\geq0$. 
By the approximation we get
\begin{equation}\label{eq:ghys}
\begin{split}
m\left(2(b-(2l-1))\theta+\frac{2l(l-1)}{b}\right)\\ -n(m)\left(2(a-(2l'-1))\theta+\frac{2l'(l'-1)}{a}\right)
\geq -2(a+b),
\end{split}
\end{equation}
where $l$ and $l'$ are positive integers with $\frac{l-1}{b}<\theta\leq\frac{l}{b}$ and $\frac{l'-1}{a}<\theta\leq\frac{l'}{a}$, respectively. 
Choosing $\theta=\frac{1}{a}$ 
inequality \eqref{eq:ghys} becomes
\[
m\left(2\frac{(b-(2\lceil\frac{b}{a}\rceil-1))}{a}+2\frac{\lceil\frac{b}{a}\rceil(\lceil\frac{b}{a}\rceil-1)}{b}\right)-n(m)2\frac{a-1}{a}\geq -2(a+b)
\]
or equivalently
\begin{equation}\label{eq:upperboundforGadjacency}
\frac{n(m)}{m}\leq \frac{a\lceil\frac{b}{a}\rceil^2-(a+2b)\lceil\frac{b}{a}\rceil+b(b+1)}{(a-1)b} + \frac{a(a+b)}{m(a-1)}.
\end{equation}
This proves the first inequality.\footnote{Note the following technical point.
If $\omega=e^{2\pi i \frac{1}{a}}$ is not a root of unity of prime order, 
then Lemma~\ref{lemma:Sigundercrossingchange} cannot be applied as above.
Instead one chooses a sequence of $\theta_{k}$ tending to $\frac{1}{a}$,
such that every $e^{2\pi i \theta_k}$ is a root of unity of prime order. 
}
The second inequality can be checked by a calculation.
\end{proof}

\begin{Remark}\label{rmk:optSigbound}
 Our choice $\theta=\frac{1}{a}$ is the best possible and yields the optimal bound for $\overline{c}(a,b)$
that can be achieved using the properties of signatures from Lemma~\ref{lemma:Sigundercrossingchange}.
This can be checked using the above approximation from \cite{GambaudoGhys_BraidsSignatures}.
\end{Remark}

In order to determine $\underline{c}(a,b)$ and $\overline{c}(a,b)$ for $(a,b)\neq (2,3)$,
we now wish to find geometric constructions in the spirit of Section~\ref{sec:T(2,?)<T(3,?)}
that at least for some $a$ and $b$ yield a lower bound for $\underline{c}(a,b)$ that is equal to the upper bound given by Proposition~\ref{prop:upperboundforGadjacency}.
So far we have only found constructions giving lower bounds that do not coincide with the upper bounds,
e.g.\ $\frac{5}{3}\leq \underline{c}(2,4)\leq\overline{c}(2,4)\leq 2$ and $\frac{9}{8}\leq\underline{c}(3,4)\leq\overline{c}(3,4)\leq \frac{5}{4}$.

\section{Algebraic adjacency}\label{sec:algebraically}
In this section we compare $\leq_G$ with an adjacency notion for plane curve singularities.
We first recall the notion of an algebraic knot following Milnor \cite{milnor_SingularPoints}.
Let $f\colon(\C^2,0)\to(\C,0)$ be a polynomial function or a holomorphic function germ that is
irreducible\footnote{With the weaker assumption `square-free' most of what is done in this section still works, but we get links instead of knots.}
in the ring of holomorphic function germs $\C\{x,y\}$
and has an isolated singularity at the origin.
The transversal intersection of its zero set $V(f)\subseteq \C^2$
with a sufficiently small sphere around the origin $S^3_\varepsilon=\{(x,y)\in\C^2\;\vert\;\Vert x \Vert^2+\Vert y\Vert^2=\varepsilon^2\}$ is a knot in $S^3_\varepsilon\cong S^3$
called the \emph{knot of the singularity of $f$}.
For example, the torus knot $T(n,m)$ is the knot of the singularity of $x^n-y^m$.
In this case the small sphere can be taken to be the standard unit sphere $S^3$;
thus, $T(n,m)=S^3\cap\lbrace (x,y)\in \C^2\;\;|\;\;x^n-y^m=0\rbrace\subset S^3$.
Knots that can occur as knots of singularities are called \emph{algebraic}.

Arnol'd studied adjacency of singular function germs \cite[Definition 2.1]{Arnold_normalforms}, see also \cite{siersma_diss}.
As we are interested in knots, we study singular function germs only up to topological type,
i.e.~up to the isotopy class of their knots of singularity, see e.g.~\cite{brieskornknoerrer}.
Thus, we use the following version of adjacency. 

A \emph{deformation} of $f\in\C\{x,y\}$ is a smooth family $h_t\in\C\{x,y\}$, defined for small enough real $t\geq0$, with $h_0=f$.
\begin{Definition}
Let $K_1$ and $K_2$ be algebraic knots. We say $K_1$ is \emph{algebraically adjacent} to $K_2$, denoted by $K_1\leq_a K_2$, if there exists a germ $f\in\C\{x,y\}$ with $K_2$ as knot of the singularity
and a deformation $h_t$ of $f$,
such that for small nonzero $t$ the germ $h_t$ has $K_1$ as knot of the singularity.
\end{Definition}
\begin{Remark}\label{muadj=algadj}
Since every holomorphic germ yields the same knot as its Taylor polynomials of large enough degrees,
one can study polynomials or holomorphic germs.

Isotopy classes of algebraic knots can be identified canonically with $\mu$-constant-homotopy classes of irreducible germs $(\C^2,0)\to (\C,0)$,
where $\mu$ is the Milnor number.
With this identification the above notion of adjacency for algebraic knots corresponds to
the concept of $\mu$-adjacency studied by Siersma in \cite{siersma_diss}. 
We sketch this identification.
If two plane curves can be connected by a $\mu$-constant path, then the associated algebraic knots are isotopic \cite{Le_Ramanujam_73_InvMilnorNRimpliesInvTopType}.
For the converse, assume that two irreducible 
germs $f_0$ and $f_1$ have the same knot of singularity.
After coordinate changes they are both of the form $y^m+c_{m-1}(x)y^{m-1}+\cdots+c_0(x)$,
where $m$ is the multiplicity of $f_0$ and $f_1$, and where the $c_k\in\C\{x\}$ are holomorphic germs with $c_k(0)=0$.
If $f_0$ and $f_1$ have the same knot of singularity,
then they have the same essential terms in their corresponding Puiseux expansions $y_0(x^\frac{1}{m})$ and $y_1(x^\frac{1}{m})$, see e.g.\ \cite{brieskornknoerrer}.
Thus, the two Puiseux expansions can be connected by a family of Puiseux expansions $y_t(x^\frac{1}{m})$ with the same essential terms. This yields a $\mu$-constant family of germs
\[
f_t=\prod_{\xi^m=1}\left(y-y_t(\xi x^\frac{1}{m})\right)\in \C\{x,y\}
\]
that connects $f_0$ to $f_1$.
\end{Remark}

As described in the introduction, algebraic adjacency has similar
properties as Gordian adjacency.
For example, Theorem~\ref{thm:T(n,m)<T(a,b)ifn<am<b} is known and easy to show for $\leq_a$ instead of $\leq_G$.

\begin{prop}\label{prop:T(n,m)<T(a,b)ifn<am<b(algebraic)}
If $n\leq a$ and $m\leq b$, then T$(n,m)\leq_aT(a,b)$.
\end{prop}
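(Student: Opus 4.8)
The plan is to exhibit a single explicit polynomial deformation of $x^a-y^b$ whose nearby members carry the singularity knot $T(n,m)$. Recall that $T(a,b)$ is the knot of the singularity of $f=x^a-y^b$, so by the definition of $\leq_a$ it suffices to produce a deformation $h_t$ with $h_0=f$ such that $h_t$ has $T(n,m)$ as its knot of singularity for all small nonzero $t$. First I would set
$$h_t(x,y)=x^n(x-t)^{a-n}-y^m(y-t)^{b-m},$$
which is manifestly a smooth (indeed polynomial) family with $h_0=x^a\cdot 1-y^b\cdot 1 = x^a-y^b=f$.

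The core step is to identify the knot of singularity of $h_t$ for $t\neq0$. The point of the factored form is that for $t\neq0$ the factors $(x-t)^{a-n}$ and $(y-t)^{b-m}$ are \emph{units} in $\C\{x\}$ and $\C\{y\}$ respectively, since they take the nonzero values $(-t)^{a-n}$ and $(-t)^{b-m}$ at the origin. Writing $u(x)=(x-t)^{a-n}$ and $v(y)=(y-t)^{b-m}$, I would choose holomorphic germs $w(x)$ and $\tilde{w}(y)$ with $w(x)^n=u(x)$, $\tilde{w}(y)^m=v(y)$ and $w(0),\tilde{w}(0)\neq0$ (possible precisely because $u,v$ are units). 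Then $X=x\,w(x)$ and $Y=y\,\tilde{w}(y)$ define a biholomorphic germ of $(\C^2,0)$ — its differential at the origin is diagonal with nonzero entries $w(0),\tilde{w}(0)$ — in which $h_t=X^n-Y^m$. Hence $h_t$ is analytically equivalent to $x^n-y^m$, so its knot of singularity is exactly $T(n,m)$.

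Finally I would record that the coprimality hypotheses $\gcd(a,b)=1$ and $\gcd(n,m)=1$ guarantee that $h_0=x^a-y^b$ and (via the equivalence above) $h_t$ are irreducible in $\C\{x,y\}$ with isolated singularity at the origin, so that the knots of singularity are genuinely knots and the definition of $\leq_a$ applies; singular points of $V(h_t)$ away from the origin are irrelevant, since the knot of singularity is cut out by a small sphere around the origin. Combining the three observations yields $T(n,m)\leq_a T(a,b)$.

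The only genuine subtlety, which the factored form is engineered to dissolve, is the penultimate step: a priori the lower-order terms produced by expanding $(x-t)^{a-n}$ could change the topological type of the germ. The main obstacle is therefore to see that they do not, and the unit-root coordinate change $x\mapsto x\,w(x)$, $y\mapsto y\,\tilde{w}(y)$ settles this cleanly exactly because $u$ depends only on $x$ and $v$ only on $y$, so the two $n$-th and $m$-th roots can be extracted separately without introducing any cross terms.
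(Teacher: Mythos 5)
Your proof is correct and follows essentially the same route as the paper: the paper deforms $x^a-y^b$ by $t(x^n-y^m)$ to obtain $x^n(t+x^{a-n})-y^m(t+y^{b-m})$ and then removes the unit factors by a biholomorphic change of coordinates near the origin, exactly the root-extraction step you carry out (in more detail) for your cosmetically different deformation $x^n(x-t)^{a-n}-y^m(y-t)^{b-m}$.
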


\begin{proof}
 The torus knot $T(a,b)$ is the knot of the singularity $y^a-x^{b}$. We choose as deformation $h_t(x,y)=y^a-x^{b}+t(y^n-x^{m})$.
For $t$ small (but fixed), we perform, in a small chart around the origin, a biholomorphic coordinate change,
which does not change the topological type of the singularity,
such that $h_t=y^n(t+y^{a-n})-x^m(t+x^{b-m})$ becomes $y^n - x^m$. To be explicit, the coordinate change is given as the inverse of the holomorphic map $(x,y)\mapsto (x\sqrt[m]{t+x^{b-m}},y\sqrt[n]{t+y^{a-n}})$.

\end{proof}

The obstruction to Gordian adjacency given in Corollary~\ref{cor:K1<K2->g(K2)-g(K1)>sigmdiff/2)} also holds for algebraic adjacency.
Actually, Corollary~\ref{cor:K1<K2->g(K2)-g(K1)>sigmdiff/2)} and its counterpart for $\leq_a$ are a consequence of the fact
that $\vert\frac{\sigma_\omega(K_2)-\sigma_\omega(K_1)}{2}\vert$ is less than
or equal to the cobordism distance of $K_1$
and $K_2$, and the following.
For algebraic knots, both $K_1\leq_G K_2$ and $K_1\leq_a K_2$
yield a cobordism in $S^3\times[0,1]$ between $K_1$ and $K_2$ of minimal genus $u(K_2)-u(K_1)=g_s(K_2)-g_s(K_1)$.
For an algebraic adjacency given by a deformation $h_t$, this cobordism is given as follows.
Let $S_2$ be a sufficiently small sphere with $K_2=S_2\cap V(h_0)$.
Then, by transversality, $t$ can be chosen small enough such that $S_2\cap V(h_t)$ is still $K_2$ and $K_1=S_1\cap V(h_t)$ for a small enough sphere $S_1$.
By a small perturbation of $h_t$, the zero-set $V(h_t)$ becomes a smooth algebraic curve $F$
with $K_2=S_2\cap F$ and $K_1=S_1\cap F$.
The cobordism between $K_1$ and $K_2$, which is given by $F$,
has minimal genus $u(K_2)-u(K_1)=g_s(K_2)-g_s(K_1)$ by the Thom conjecture~\cite[Corollary 1.3]{KronheimerMrowka_Gaugetheoryforemb}.

Despite similarities, Gordian adjacency and algebraic adjacency do not agree for algebraic knots or even torus knots.
The obstruction given in Proposition~\ref{prop:Sigundercrossingchange}
does not hold for algebraic adjacency.
Concretely, we have $T(2,15)\nleq_GT(3,10)$ by Theorem~\ref{thm:T(2,n)<T(3,m)}, but $T(2,15)\leq_aT(3,10)$,
which we show now.
The next proposition generalizes the algebraic adjacency $T(2,6)\leq_a T(3,4)$ calculated by Arnol'd~\cite[$A_5\leftarrow E_6$]{Arnold_normalforms}.
This gives a large class of examples of algebraic adjacencies of torus links, including $T(2,15)\leq_aT(3,10)$,
which are not covered by Proposition~\ref{prop:T(n,m)<T(a,b)ifn<am<b(algebraic)}.

\begin{prop}\label{prop:a,bc->b,ac}
  Let $a,b,c$ be positive integers with $a\leq b$, then $T(a,bc)\leq_aT(b,ac)$. In particular, $T(2,3c)\leq_aT(3,2c)$.
\end{prop}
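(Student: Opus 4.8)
The plan is to construct, for the singularity $y^{b}-x^{ac}$ whose knot is $T(b,ac)$, an explicit one-parameter deformation whose generic member carries a singular point of knot type $T(a,bc)$, thereby generalising Arnold's adjacency $A_{5}\leftarrow E_{6}$, which is exactly the case $(a,b,c)=(2,3,2)$. By the definition of algebraic adjacency (and Remark~\ref{muadj=algadj}, which lets me work with any convenient germ and recentre freely) it suffices to exhibit such a family. Before constructing anything I would record the numerical consistency check that any such adjacency must satisfy: by (\ref{equnknottinNrTorusKnot}) the Milnor numbers obey
$$\mu(T(b,ac))-\mu(T(a,bc))=(b-1)(ac-1)-(a-1)(bc-1)=(b-a)(c-1)\geq 0,$$
which vanishes precisely when $c=1$ (the trivial case $T(b,a)=T(a,b)$) and otherwise equals the number of nodes that the deformation must split off. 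This both confirms that the adjacency is not obstructed by genus and tells me how much Milnor number has to escape.

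Next I would write down the deformation. Starting from $f(x,y)=y^{b}-x^{ac}$, I look for a family $h_{t}$ with $h_{0}=f$ such that for small $t\neq 0$ the curve $V(h_{t})$ has an isolated singular point $p_{t}$ with $p_{t}\to 0$ as $t\to 0$, the remaining $(b-a)(c-1)$ units of Milnor number being carried by ordinary nodes away from $p_{t}$. A key observation guiding the choice is that the singular point cannot remain at the origin: the monomial $-x^{ac}$ of $f$ lies strictly below the Newton segment of $T(a,bc)$, since for the exponent $(ac,0)$ one has $\frac{ac}{bc}+0=\frac{a}{b}<1$, so no small perturbation fixing the origin can produce the required $x$-exponent $bc$. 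Hence the construction must let the singularity migrate, and the family should be taken in a weighted (quasi-homogeneous) form adapted to the weights $\mathrm{wt}(x)=a$, $\mathrm{wt}(y)=bc$, which balance $x^{bc}$ against $y^{a}$.

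The heart of the proof is identifying the local type at $p_{t}$. Here I would recentre at $p_{t}$ and perform a quasi-homogeneous rescaling with the weights above; the claim to verify is that the leading part of the rescaled germ is exactly $Y^{a}-X^{bc}$ and that the family is Newton-nondegenerate, so that the higher-order terms can be absorbed into unit factors by a biholomorphic coordinate change, precisely as in the proof of Proposition~\ref{prop:T(n,m)<T(a,b)ifn<am<b(algebraic)}, where $h_{t}=y^{n}(t+y^{a-n})-x^{m}(t+x^{b-m})$ is reduced to $y^{n}-x^{m}$ by dividing out the units $t+y^{a-n}$ and $t+x^{b-m}$. Once the local normal form $Y^{a}-X^{bc}$ is established, the knot of the singularity at $p_{t}$ is $T(a,bc)$ and the adjacency follows; the statement $T(2,3c)\leq_{a}T(3,2c)$ is then the special case $a=2,\,b=3$.

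The step I expect to be the main obstacle is the explicit choice of deformation together with the verification that the emergent singularity is $T(a,bc)$ and not some other torus knot such as $T(a,ac)$: a naive perturbation that merely lowers the $y$-exponent from $b$ to $a$ (by introducing a term $t\,y^{a}$ and factoring $y^{a}(y^{b-a}+t)$) leaves the $x$-exponent at $ac$, so the delicate point is to \emph{simultaneously} raise the $x$-exponent to $bc$. This is exactly the exponent bookkeeping controlled by the quasi-homogeneous weights, and matching the family and the rescaling to these weights — so that both the leading form and the Newton-nondegeneracy come out correctly — is where the real work lies; the subsequent unit factorisation and the reading off of the knot type are then routine.
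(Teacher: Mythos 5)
There is a genuine gap, and it starts with your ``key observation'': the claim that the singular point cannot remain at the origin is false, and in fact the paper's construction keeps it there. The deformation used is $h_t=y^b-(x^c+ty)^a$, which is singular at the origin for all $t$; after the shearing change of coordinates $y\mapsto \frac{y-x^c}{t}$ it becomes (up to the unit $t^{-b}$) $(y-x^c)^b-t^by^a$, whose Newton polygon is the single segment from $(0,a)$ to $(bc,0)$ with nondegenerate principal part, so Kouchnirenko's theorem identifies the knot as $T(a,bc)$. Your Newton-polygon obstruction fails because the Newton polygon is not invariant under nonlinear coordinate changes: the monomial $x^{ac}$ present in $h_t$ only forces the intersection multiplicity of $V(h_t)$ with the line $y=0$ to be $ac$ in those coordinates, and since $a\mid ac$ this is perfectly compatible with the branch being of type $y^a-x^{bc}$ (for the parametrization $x=s^a$, $y=s^{bc}$, the substitution $y\mapsto y-x^c$ produces exactly such a realization with contact $ac$ instead of $bc$). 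Indeed, in the original coordinates $h_t$ has Newton boundary spanned by $(0,a),\dots,(ac,0)$ with the \emph{degenerate} principal part $-(x^c+ty)^a$, which is why no contradiction with your segment computation arises.

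Beyond this, the proposal never actually produces the deformation: the consistency check on Milnor numbers, the choice of weights, and the intended rescaling-plus-unit-factorization are all sound as a general template, but the entire content of the proof is the explicit family, and the heuristic you adopt to search for it (singularity migrating off the origin, with the excess Milnor number $(b-a)(c-1)$ escaping into nodes near $p_t$) points away from the construction that works. The node-splitting requirement is also not needed by the definition of $\leq_a$; all that matters is the topological type of the germ at the origin for small $t\neq 0$. To repair the argument, replace the migration picture by the shear: keep the singularity at $0$, take $h_t=y^b-(x^c+ty)^a$, and carry out the coordinate change and Newton-nondegeneracy check described above.
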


\begin{proof}
   Suppose $a<b$, and choose $f_t=y^b-(x^c-ty)^a$ as deformation. Since $T(b,ac)$ is the knot of the singularity $f_0=y^b-x^{ac}$,
it remains to show that for small $t\neq0$, the knot of the singularity $f_t$ is $T(a,bc)$.
We fix an arbitrary $t>0$ and change coordinates locally around the origin by $(x,y)\mapsto (x,\frac{x^c-y}{t})$.
With this  $f_t$ becomes $(\frac{x^c-y}{t})^b-y^a$. For all monomials of $f_t$ except $-y^a$, the bi-degree\textemdash the tuple of integers consisting of the $x$ degree and $y$ degree of a monomial\textemdash
lies on the line in $\Z^2$ that goes through $(bc,0)$ and $(0,b)$.
This shows that  $f_t$ and $x^{bc}-y^a$ have the same two monomials with bi-degree on the line $(bc,0)$ and $(0,a)$ and all other bi-degrees lie strictly above this line. Therefore,
they have the same knot of singularity by a result of Kouchnirenko~\cite[Corollaire 1.22]{Kouchnirenko_NewtonPoly&MilnorNr}.
\end{proof}

\begin{Remark}
Proposition~\ref{prop:a,bc->b,ac} gives an algebraic proof of an observation by Baader, which states that the cobordism distance of $T(a,bc)$ and $T(b,ac)$ is equal to
\[
\frac{bc+a-ac-b}{2}=\frac{(b-a)(c-1)}{2}
\] 
and which is a key proposition in~\cite{Baader_ScissorEq}.

\end{Remark}

Proposition~\ref{prop:a,bc->b,ac} shows that if we define an algebraic counterpart of $c(a,b)$ in Section~\ref{sec:higherIndex&Asymtotics},
it is larger than or equal to $\frac{b}{a}$, whereas in the Gordian setting $c(a,b)$ is less than or equal to $\frac{b}{a}$ by Proposition~\ref{prop:upperboundforGadjacency}.
Thus, asymptotically, whenever $T(a,n)\leq_GT(b,m)$ for $a\leq b$, we get roughly $n\leq\frac{b}{a}m$ and, therefore, $T(a,n)\leq_aT(b,m)$.
We take this as evidence to conjecture that Gordian adjacency implies algebraic adjacency for torus knots.

\bibliographystyle{alpha}
\bibliography{peterbib}

\end{document}